\documentclass[12pt]{article}
\usepackage{amssymb,amsmath}
\usepackage{amsthm}

\usepackage{color}

\usepackage[latin1]{inputenc}
\usepackage[T1]{fontenc}

\usepackage{graphicx}
\DeclareGraphicsExtensions{.pdf,.png,.jpg}

\graphicspath{ {images/} }

\textwidth16cm
\hoffset-1cm
\voffset-2cm
\textheight23cm

\newcommand{\prob}{\mathbb P}
\newcommand{\eps}{\varepsilon}

\newtheorem{Theorem}{Theorem}
\newtheorem{Definition}[Theorem]{Definition}
\newtheorem{Lemma}[Theorem]{Lemma}
\newtheorem{Corollary}[Theorem]{Corollary}

\DeclareMathOperator\sgn{sgn}

\numberwithin{Theorem}{section} \numberwithin{equation}{section}

\begin{document}
\title{
Spherically Symmetric Random Permutations
}
\author {Alexander Gnedin\thanks{Queen Mary, University of London, email: a.gnedin@qmul.ac.uk}~~ and Vadim Gorin
\thanks{Massachusetts Institute of Technology and Institute for Information Transmission Problems,
email: vadicgor@gmail.com. Partially supported by NSF grant DMS-1407562 and by the
Sloan Research Fellowship.}} \maketitle

\begin{abstract}
\noindent We consider random permutations which are spherically symmetric with respect to a metric
on the symmetric group $S_n$ and are consistent as $n$ varies. The extreme infinitely spherically
symmetric permutation-valued processes are identified for the Hamming, Kendall-tau and Caley
metrics. The proofs in all three cases are based on a unified approach through stochastic
monotonicity.
\end{abstract}

\noindent
MSC:
\vskip0.2cm
\noindent
{\it Keywords:} random permutations, spherical symmetry, Martin boundary.
\noindent

\noindent
\section{Introduction}
Characterisation of  processes with symmetries  as  mixtures of extreme  processes is a central theme
 in the circle of ideas surrounding de Finetti's theorem
 on infinite exchangeability.
A distinguished example is Freedman's \cite{Freedman} representation of a spherically symmetric sequence of real random variables $\xi_1,\xi_2,\dots$  as a scale mixture of i.i.d. zero-mean Gaussian sequences.
This result is equivalent to  Schoenberg's theorem from analysis,
see \cite{Berman, Kallenberg, Vitale} for background
and various proofs.

Traditionally, spherical symmetry  in $n$ dimensions   is defined as invariance of the distribution of $\xi_1,\dots,\xi_n$ under the group of orthogonal transformations.
But this property holds precisely  when  the conditional distribution on every sphere centred at the origin  is uniform.
The latter interpretation is better suitable for generalisation to metric spaces other than Euclidean and, in fact,
this kind of  extension of Freedman's theorem for $L^p$-spherically symmetric  sequences $\xi_1,\xi_2,\dots$  has appeared in the literature \cite{Berman}.
Seeking for further analogues  one is naturally lead to consider the infinite spherical symmetry in the framework of
 projective limits of metric spaces, as counterparts of the real space ${\mathbb R}^\infty$.

In this paper we   explore the setting of  combinatorial spaces of permutations
$S_n$  equipped with some  metric.
There are many meaningful metrics on $S_n$
used in applications to quantify the unsortedness of permutation  or the similarity between  rankings   \cite{Chritchlow}, but it is far from  obvious if these can be complemented by projections that
preserve the spherical symmetry.
We observe that such projections connecting the $S_n$'s exist
for  three classic metrics --  Hamming,   Kendall-tau and Cayley -- and for each of these
we identify explicitly the extreme permu\-ta\-tion-valued processes with spherical symmetry.
On the technical side, we will emphasize the approach based on stochastic monotonicity.
This method has been previously used
 in \cite{Gibbs}  in a setting closely related to ours and in
\cite{BG} in the study of Markov chains on the Young graph arising in the {asymptotic representation theory} of symmetric groups.

\section{Virtual permutations}

Suppose $S_n$  ($n=1,2,\dots$),
the symmetric group  of permutations of $[n]=\{1,\dots,n\}$,   is equipped with some metric.
Let $|\pi|$ denote the distance between $\pi\in S_n$ and the identity permutation, so
 $\{\pi\in S_n: |\pi_n|=r\}$ is a  sphere of radius $r$ centred at the identity.

A random permutation $\Pi$ of $[n]$ is just a random variable  with values in $S_n$. We say that $\Pi$ is {\it spherically symmetric }
if the probability
$\prob(\Pi=\pi)$ depends on
 $\pi\in  S_n$ only through  $|\pi|$.
For the family of spherically symmetric permutations, the random variable $|\Pi|$ is a sufficient statistic, in the sense that
 given $|\Pi|=r$ the conditional distribution of $\Pi$ is uniform on the sphere
of radius $r$.

Let $f_n:S_n\to S_{n-1}$ be a system of $n-$to$-1$ projections ($n>1$). Wherever $f_n(\pi)=\sigma$ for
 $\pi\in S_n$ and $\sigma\in S_{n-1}$ we say that  $\sigma$ is the projection of $\pi$, and  that $\pi$ is an extension of
$\sigma$. Thus every $\sigma\in S_{n-1}$ has exactly $n$  extensions in $S_n$. Generalising by
induction this relation, we define for $\nu>n$ the projection $f_{\nu\downarrow n}: S_\nu \to S_n$
through $f_{\nu\downarrow n}:=f_{n+1}\circ f_{n+2} \circ\dots \circ f_\nu$. For $\sigma\in S_n$ and
$\pi\in S_\nu$ with  $n<\nu$   we say that $\pi$ is an extension of $\sigma$ (and  $\sigma$ is the
projection of $\pi$) if $f_{\nu\downarrow n}(\pi)=\sigma$.

We shall assume throughout that the metric is {consistent} with  the projections, meaning that for all $n>1, r\geq 0$ and $\sigma\in S_{n-1}$
the number of extensions
$\#\{\pi\in S_n: |\pi|=r, f_n(\pi)=\sigma \}$ depends on $\sigma$ only through  $|\sigma|$.
The  condition is needed to ensure that  spherical symmetry is preserved by the projections, indeed the consistency entails that
\begin{equation}\label{recPi}
\prob(f_n(\Pi)=\sigma)=\sum_{r\geq 0}\sum_{\pi\in S_n: |\pi|=r,\, f_n(\pi)=\sigma}\prob(\Pi=\pi)
\end{equation}
only depends on $|\sigma|$.

The projective limit of $(S_n, f_n)$'s is the compact (in the product topology) space of
 sequences $\boldsymbol{\pi}=(\pi_1,\pi_2,\dots)$  with $\pi_n\in S_n$ and
$f_n(\pi_n)=\pi_{n-1}$ for $n>1$. We call $\boldsymbol{\pi}$ a {\it virtual
permutation}. The term is borrowed from \cite{KOV}, where a particular projective
limit of the permutation spaces  was considered. In known examples a virtual
permutation can be interpreted as a kind of combinatorial structure build upon the
infinite set ${\mathbb N}$, either a  bijection (infinite permutation) ${\mathbb
N}\to{\mathbb N}$ or a more complex object.

A random virtual permutation $\boldsymbol{\Pi}=(\Pi_1,\Pi_2,\dots)$  is a random variable, which we
canonically realize as the identity  function  on the projective limit space endowed with some
probability measure (distribution of $\boldsymbol{\Pi}$). By the measure extension theorem, the
distribution of  $\boldsymbol{\Pi}$ is uniquely determined by the marginal distributions of
$\Pi_n$'s, provided these are consistent with the projections.  We may also view $\boldsymbol{\Pi}$
dynamically as  a permutation-valued growth process, where  $\Pi_n$ extends $\Pi_{n-1}$ in some
random fashion.

Special notation  $\boldsymbol{\Pi^*}=(\Pi_1^*, \Pi_2^*,\dots)$ will be used
for the {\it uniform} virtual permutation which has $\Pi_n^*$ uniformly distributed over $S_n$ for every $n$; the consistency in this case follows from the
$n-$to$-1$ property of $f_n$'s.   To construct $\boldsymbol{\Pi}^*$ sequentially, at each step  the extension  must be chosen   from the available options uniformly.
It should be stressed that the support and distribution of $\boldsymbol{\Pi}^*$ depend on  the type of  projections.

In the sequel we focus on  {\it infinitely spherically symmetric} (ISS) random
virtual permutations $\boldsymbol{\Pi}$, which have every $\Pi_n, n=1,2,\dots,$  spherically symmetric. Clearly,   the uniform $\boldsymbol{\Pi}^*$ is ISS.

The sequence of sufficient statistics $|\boldsymbol{\Pi}|:=(|\Pi_1|,|\Pi_2|,\dots)$ is  a Markov
chain  naturally associated with ISS virtual permutation. The Markov property  for
$|\boldsymbol{\Pi}|$ is readily justified by looking at the time-reversed process. Moreover, the
backward transition probabilities for $|\boldsymbol{\Pi}|$ do not depend on particular
$\boldsymbol{\Pi}$, hence  are the same as for  the uniform $\boldsymbol{\Pi}^*$. Conversely, if
a time-inhomogeneous Markov chain at every step has the same range and the same backward transition
probabilities as $|\boldsymbol{\Pi}^*|$, then it can be uniquely realised as $|\boldsymbol{\Pi}|$
for some ISS virtual permutation.

The set of spherically symmetric distributions for each $\Pi_n$ is a simplex.
The family of ISS virtual permutations is  a projective limit of these finite-dimensional simplices,
thus by a general  result
(see e.g. \cite{Goodearl}, p. 164)
  it is a Choquet simplex, i.e. a convex
compact set with the property that each element has a unique representation as a mixture of extreme
elements. In this sense the problem of describing the ISS virtual permutations amounts to
identifying the extremes.

There is one very general approach to the problem, which can be traced back deeply in history. By a
theorem attributed to Maxwell and Borel  \cite{Kallenberg}, the projection to $n$ dimensions of the
uniform distribution on a sphere in ${\mathbb R}^{\nu}$ of radius $\lambda{\nu}^{1/2}$ converges,
as $\nu\to\infty$,  to the product of $n$ copies of  ${\cal N}(0,\lambda^2)$. Comparing with
Freedman's theorem, it follows that
 all extreme ISS sequences in the Euclidean setting appear  as limits of  such projections  from spheres in high dimensions.
Likewise, let
${\mathfrak{U}}_{\nu, r}$ be a uniformly random element of the sphere  of radius $r$ in the symmetric group $S_\nu$.
Restating another general result (cf. \cite{DF}, Theorem 4.1)   we have the following analogue.

\begin{Theorem} \label{Theorem_Martin_approximation}
 If $\boldsymbol{\Pi}=(\Pi_1,\Pi_2,\dots)$  is an extreme ISS virtual permutation, then there exists a sequence of numbers $r(\nu)$, $\nu=1,2,\dots$ such that for each $n=1,2,\dots$
the sequence  $ f_{\nu\downarrow n}( {\mathfrak{U}}_{\nu, r(\nu)})$ converges in distribution to $\Pi_n$
as $\nu\to\infty$.

\end{Theorem}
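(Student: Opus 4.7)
My strategy is to combine the finite-level decomposition induced by spherical symmetry with a compactness and Choquet-uniqueness argument on the space of ISS virtual permutations. Fix an extreme ISS virtual permutation $\boldsymbol{\Pi}$ with distribution $P$ and set $p_\nu(r):=\prob(|\Pi_\nu|=r)$. The spherical symmetry of $\Pi_\nu$ gives
$$\mathrm{Law}(\Pi_\nu)=\sum_{r}p_\nu(r)\,\mathfrak{U}_{\nu,r},$$
and the assumed consistency of the metric with the projections pushes this identity down, for every $n\le\nu$, to
$$\mathrm{Law}(\Pi_n)=\sum_{r}p_\nu(r)\,f_{\nu\downarrow n}(\mathfrak{U}_{\nu,r}).\qquad(\star)$$
I would then pass $\nu\to\infty$ in $(\star)$ and extract the radii $r(\nu)$ from the concentration that extremality forces on the mixing measure.

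To lift $(\star)$ to a statement about the whole virtual permutation, let $\Omega$ denote the compact projective limit of the $S_n$'s and, for each pair $(\nu,r)$, pick any extension $\widetilde P_{\nu,r}\in\mathcal P(\Omega)$ whose level-$\nu$ marginal equals $\mathfrak{U}_{\nu,r}$; its level-$n$ marginal for $n\le\nu$ then coincides with $f_{\nu\downarrow n}(\mathfrak{U}_{\nu,r})$. Form
$$m_\nu=\sum_{r}p_\nu(r)\,\delta_{\widetilde P_{\nu,r}}\in\mathcal P(\mathcal P(\Omega)).$$
By $(\star)$ the barycentre of $m_\nu$ agrees with $P$ on every level-$n$ marginal for $n\le\nu$, so the barycentres of $m_\nu$ converge weakly to $P$ as $\nu\to\infty$. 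The space $\mathcal P(\mathcal P(\Omega))$ is weakly compact, so I pass to a subsequential limit $m_\infty$; continuity of the barycentre map then yields that the barycentre of $m_\infty$ equals $P$. For each fixed $n$, the level-$n$ marginal map $\omega\mapsto\omega_n$ is weakly continuous on $\mathcal P(\Omega)$, and for $\nu\ge n$ the measure $m_\nu$ is concentrated on points with spherically symmetric level-$n$ marginal. Since spherical symmetry is a closed condition on $\mathcal P(S_n)$, the same property holds $m_\infty$-almost surely at every $n$, so $m_\infty$ is supported on the set $\mathcal M_{\mathrm{ISS}}$ of ISS virtual permutations.

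At this point I invoke the Choquet simplex property of $\mathcal M_{\mathrm{ISS}}$ recorded in the paragraph preceding the theorem: because $P$ is extreme, its only representing measure is $\delta_P$, and decomposing $m_\infty$ further into extremes and applying Choquet uniqueness forces $m_\infty=\delta_P$. Since the same conclusion holds along every subsequence, $m_\nu\to\delta_P$ weakly along the full sequence $\nu\to\infty$. A diagonal extraction over a nested weak-neighbourhood basis of $P$ in $\mathcal P(\Omega)$ then produces, for each $\nu$, a radius $r(\nu)$ in the support of $p_\nu$ with $\widetilde P_{\nu,r(\nu)}\to P$ weakly. Since weak convergence on $\mathcal P(\Omega)$ is convergence of finite-dimensional marginals and the level-$n$ marginal of $\widetilde P_{\nu,r(\nu)}$ is $f_{\nu\downarrow n}(\mathfrak{U}_{\nu,r(\nu)})$ for all $\nu\ge n$, the required convergence $f_{\nu\downarrow n}(\mathfrak{U}_{\nu,r(\nu)})\to\mathrm{Law}(\Pi_n)$ follows for each $n$. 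The main technical hurdle I expect is the verification that $m_\infty$ is supported on $\mathcal M_{\mathrm{ISS}}$, since this is precisely what unlocks the Choquet uniqueness and, with it, the entire concentration argument.
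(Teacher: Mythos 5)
Your argument is correct, and it is genuinely different from what the paper does: the paper does not prove Theorem \ref{Theorem_Martin_approximation} at all, but obtains it as a restatement of the general Diaconis--Freedman theorem on partial exchangeability and sufficiency (\cite{DF}, Theorem 4.1), whose standard proof runs through reverse-martingale convergence of the conditional laws $\prob(\Pi_n\in\cdot\mid |\Pi_\nu|)$ together with triviality of the tail $\sigma$-field for extreme measures. You instead give a self-contained ``soft'' proof: the decomposition $(\star)$ is exactly the sufficiency statement, and the rest is compactness of $\mathcal P(\mathcal P(\Omega))$, closedness of spherical symmetry at each level, and the fact that an extreme point of a compact convex metrizable set admits only the Dirac mass at itself as a representing measure. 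Two small remarks: that last fact is Bauer's characterisation of extreme points and requires only extremality, not the full Choquet-simplex structure, so your detour through ``decomposing $m_\infty$ further into extremes and applying Choquet uniqueness'' is more machinery than you need (though it is also valid, modulo measurability of $\omega\mapsto\lambda_\omega$, which holds in a metrizable simplex); and the consistency of the metric with the projections is not needed for $(\star)$ itself (that is just conditioning on the sufficient statistic $|\Pi_\nu|$) but is exactly what makes $f_{\nu\downarrow n}(\mathfrak{U}_{\nu,r})$ spherically symmetric, which is the step you correctly flag as the one that puts $m_\infty$ on $\mathcal M_{\mathrm{ISS}}$. What your route buys is independence from the martingale machinery and an argument that works verbatim for any consistent metric; what the citation route buys is the stronger almost-sure statement (the radii $r(\nu)$ can be taken as a realisation of $|\Pi_\nu|$ itself) and brevity.
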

\noindent
The set of probability distributions
that arise as such limits is called, depending on the context, the {Martin
boundary} or the family of Boltzmann laws.

Here is another simple property of the Euclidean spheres, which can be used to
give yet another proof of Friedman's theorem.
Let $U$ be uniformly distributed on the unit sphere in ${\mathbb R}^{\nu}$, and let
$\rho_n$ be the norm of the coordinate projection of $U$ to ${\mathbb R}^n$ for
$n<\nu$. Then $rU$ is uniform on the sphere of radius $r$,  and $r\rho_n$ is the norm of the projection of $rU$ in $n$ dimensions. Note that
 the random variable $r\rho_n$  increases with $r$.
For the symmetric group,
an analogous property in the form of {\it stochastic monotonicity} of $|f_{\nu\downarrow n}( {\mathfrak{U}}_{\nu, r})|$ in $r$
 holds for all three metrics we consider here
 (cf. Lemmas \ref{Lemma_Hamming_mono}, \ref{Lemma_Mallows_mono}, \ref{Lemma_Ewens_mono}).
We use  the property as a key tool to identify  the extreme ISS virtual permutations.

\section{Hamming spherical symmetry}

\label{Section_Hamming}

\subsection{Classification theorem}

Hamming distance between two permutations $\pi$ and $\sigma$
in $S_n$ is defined as the number of positions $j\in[n]$ where $\pi(j)\neq\sigma(j)$.
 The distance to the identity permutation is $|\pi|=n-F(\pi)$, where
$F(\pi):  =\#\{j\in [n]: \pi(j)\neq j\}$ is the number of fixed points in $\pi$. Therefore, a
sphere in the Hamming distance can have radius $0,2,3,\dots,n$. The sphere of radius $0$ has a
single element, which is  the identity permutation. The elements of the sphere of radius $n$ have no fixed
points, they are called \emph{derangements.} Thus a random permutation $\Pi$ is Hamming-spherically
symmetric if   its distribution is conditionally uniform given the number of fixed points.

Let $f_n:S_n\to S_{n-1}$ be the operation of deleting  element $n$ from permutation written in the
cycle notation. Then   permutation $\sigma\in S_{n-1}$ has $n$ extensions obtained by either
inserting element $n$ in a cycle  next to the right of one of the elements in $\sigma$, or
appending a singleton cycle $(n)$. For instance, five extensions of $(13)(24)\in S_4$ are
$$(153)(24), (135)(24), (13)(254), (13) (245), (13)(24)(5).$$
If a permutation $\sigma\in S_{n-1}$ has $k$ fixed points, then it has $1$ extension
with $k+1$ fixed points, $k$ extensions with $k-1$ fixed points and $n-k-1$
extensions with $k$ fixed points. Hence, the Hamming spherical symmetry is
consistent under this system of projections.

The virtual permutations defined via the $f_n$'s  were introduced in \cite{KOV}.
Writing permutation in the cycle notation yields  a composite combinatorial
structure, comprised of a partition of the set $[n]$ into disjoint nonempty blocks
and a linear order  on each block of the partition, with the property that in each
block the smallest integer is also the minimal element of the order. Similarly,  a
virtual permutation corresponds to a partition of $\mathbb N$ into some collection
of disjoint nonempty blocks, taken together with  a linear order on each block, such
that within the block the smallest integer  is also the minimal element in the
linear order.

The sequential construction of uniform virtual permutation $\boldsymbol{\Pi}^*$
specializes  in the cycle notation as the Dubins-Pitman  {\it Chinese restaurant
process} \cite{CSP} with the following dynamics: given the permutation at step $n-1$
is $\Pi_{n-1}=\sigma$, element $n$ is inserted with probability $1/n$ in a cycle
next to the right of any given element of $\sigma$, and appended to $\sigma$ as
singleton cycle $(n)$ with probability  $1/n$. This process has been intensely
studied, in particular, it is well known that the random series comprised of the
asymptotic frequencies of blocks follows the Poisson-Dirichlet/GEM distribution with
parameter 1 \cite{ABT, CSP}. To this we only add here that $\boldsymbol{\Pi}^*$ can
be seen as a  partition of $\mathbb N$ in infinitely many blocks, with the set of
elements within each block  ordered like the set of nonnegative rational numbers.

\smallskip
We introduce next a family of ISS virtual permutations
$\{\boldsymbol{\Pi}^{\alpha}, \alpha\in[0,1]\}$, which includes
the uniform virtual permutation as the $\alpha=0$ case.
Another edge case, $\alpha=1$, corresponds to the  trivial virtual
permutation, which restricts to every $[n]$ as the identity.

For $\alpha\in(0,1)$, to construct the
virtual permutation $\boldsymbol{\Pi}^{\alpha}$ explicitly, it will be convenient  to introduce
enriched permutations of $[n]$, which have an additional feature
 that genuine   singleton cycles are distinguished from the cycles
which will be bigger  within a larger context $[\nu]\supset[n]$ but have a sole representative in
$[n]$. For $\boldsymbol{\pi}$ a virtual permutation, call element $n$ {\it singular} if $(n)$ is a
singleton cycle in every $\pi_\nu, ~\nu\geq n$, and call $n$ {\it regular} otherwise. Define
enriched virtual permutation $\tilde{\boldsymbol{\pi}}=(\tilde{\pi}_1, \tilde{\pi}_2,\dots)$ to be
a virtual permutation $\boldsymbol{\pi}=(\pi_1,\pi_2,\dots)$ with additional classification of the
elements in each $\pi_n$ in singular and regular, and such that
 $\tilde{\pi}_n$ is consistent under the deletion of elements from cycles.
For instance $\tilde{\pi}_6=(13)({\it 2})(46)(5)$ with singular element ${ 2}$  and regular
other five elements encodes that some elements $\nu>6$ will be inserted in the cycle of every
element except $2$.
The correspondence between
$\boldsymbol{\pi}$ and $\tilde{\boldsymbol{\pi}}$ is a canonical bijection, but for any fixed $n$,
$\tilde{\pi}_n$ as compared to $\pi_n$ contains more information about $\boldsymbol{\pi}$.

For $\alpha\in[0,1)$ define a random enriched virtual permutation by the following two rules.
\begin{itemize}
\item[(i)] Each $n$ independently of other elements is singular with probability $\alpha$.
\item[(ii)]  The virtual permutation restricted to the set of regular elements is  distributed like the uniform  $\boldsymbol{\Pi}^*$, provided the regular elements are enumerated
in increasing order by $\mathbb N$.
\end{itemize}
Here is a sequential construction of $\tilde{\boldsymbol\Pi}^\alpha$, modifying the Chinese restaurant process. Element 1 is singular
with probability $\alpha$. Inductively, for $n>1,$  as an enriched permutation of $[n-1]$ with some
$s$ singular fixed points  (hence $n-s-1$ regular elements) has been constructed, element $n$
becomes singular with probability $\alpha$, is inserted in existing cycle next to the right of any
given regular element with probability $(1-\alpha)/(n-s)$, and is
 appended  as a regular singleton cycle with probability $(1-\alpha)/(n-s)$.

Discarding the division into regular and singular elements
yields $\boldsymbol{\Pi}^\alpha$. Explicitly,
for the probability
$p_{n,k}:=\prob (\Pi^\alpha_n=\sigma)$
 of a permutation $\sigma\in S_n$ with $k$ fixed points we have
\begin{equation}\label{eq_Hamming_coherent}
 p_{n,k}{(\alpha)}=\sum_{j=0}^k {k\choose j}
\alpha^j(1-\alpha)^{n-j} \,\frac{1}{(n-j)!}\,,~~ ~k\in\{0,1,\ldots,n-2,n\}.
\end{equation}
The formula follows by noting that the probability of any given enriched permutation with $j$
singular elements is $\alpha^j(1-\alpha)^{n-j}/(n-j)!$, and that any $j$ out of $k$ fixed points
can be singular. It is obvious from  \eqref{eq_Hamming_coherent}  that $\boldsymbol
\Pi^\alpha$ is ISS.

\begin{Lemma} \label{Lemma_Hamming_regularity}
The virtual permutation $\boldsymbol{\Pi}^\alpha$ satisfies
$$\lim_{n\to\infty} \frac{F(\Pi_n^\alpha)}{n}\to\alpha$$
almost surely.
\end{Lemma}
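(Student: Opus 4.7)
The plan is to exploit the enriched construction of $\boldsymbol{\Pi}^\alpha$, which splits each element of $[n]$ into singular and regular types. Let $S_n$ denote the number of singular elements in $[n]$ and $R_n := n - S_n$ the number of regular ones. Singular elements are singleton cycles in every $\Pi^\alpha_\nu$ with $\nu \ge n$, so they contribute automatically to the fixed points of $\Pi^\alpha_n$; the remaining fixed points are exactly the regular elements that happen to form singleton cycles in $\Pi^\alpha_n$. Writing $F^{\mathrm{reg}}_n$ for the latter count, we have the clean decomposition $F(\Pi^\alpha_n) = S_n + F^{\mathrm{reg}}_n$, and $F^{\mathrm{reg}}_n$ equals the number of singleton cycles in the restriction of $\Pi^\alpha_n$ to its regular elements.

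By rule (i) of the construction the indicators ``$j$ is singular'' are i.i.d.\ Bernoulli$(\alpha)$, so the strong law of large numbers yields $S_n/n \to \alpha$ and $R_n/n \to 1-\alpha$ almost surely. By rule (ii), the restriction of $\boldsymbol{\Pi}^\alpha$ to its regular elements (re-labelled by $\mathbb N$ in increasing order) has the law of the uniform virtual permutation $\boldsymbol{\Pi}^*$. Coupling this restriction to $\boldsymbol{\Pi}^*$ pathwise, we obtain the identity $F^{\mathrm{reg}}_n = F(\Pi^*_{R_n})$.

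The task thus reduces to the uniform case: $F(\Pi^*_k)/k \to 0$ almost surely as $k \to \infty$. For a uniform random permutation of $[k]$ the factorial moments of $F$ satisfy $\mathbb E[F^{(j)}] = 1$ for all $j \le k$, so $\mathbb E[F(\Pi^*_k)^2] \le 2$ uniformly in $k$. Chebyshev's inequality then gives $\prob(F(\Pi^*_k) > \eps k) = O(k^{-2})$, and Borel--Cantelli delivers the a.s.\ convergence. Combining, for $\alpha < 1$ we get
$$F^{\mathrm{reg}}_n/n = (F(\Pi^*_{R_n})/R_n)(R_n/n) \to 0\cdot(1-\alpha) = 0 \quad \text{a.s.,}$$
while the $\alpha = 1$ case is trivial since then $\Pi^\alpha_n$ is a.s.\ the identity. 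Adding $S_n/n \to \alpha$ finishes the proof.

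The only delicate step is the uniform estimate $F(\Pi^*_k)/k \to 0$ a.s., which rests on a one-line second-moment calculation; everything else is an application of the strong law of large numbers and a clean coupling argument. The passage along the random sequence $R_n$ goes through because $R_n \to \infty$ a.s., so that the a.s.\ convergence for fixed-index $k$ transfers to the random index.
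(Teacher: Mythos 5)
Your proof is correct, but it follows a different route from the paper's. The paper establishes almost sure convergence of $F(\Pi_n)/n$ for \emph{every} virtual permutation at once, by observing that the backward transition probabilities make $F(\Pi_n)/n$ a reverse submartingale; it then identifies the limit as $0$ for $\boldsymbol{\Pi}^*$ from ${\mathbb E}[F(\Pi_n^*)]=1$, and disposes of general $\alpha$ with a one-line appeal to the construction. You instead work entirely from the enriched construction: the decomposition $F(\Pi_n^\alpha)=S_n+F_n^{\mathrm{reg}}$, the strong law for the i.i.d.\ Bernoulli$(\alpha)$ singular indicators, the pathwise identification $F_n^{\mathrm{reg}}=F(\Pi^*_{R_n})$ via rule (ii), and a second-moment/Borel--Cantelli argument (using the factorial-moment identity ${\mathbb E}[F^{(j)}]=1$, hence ${\mathbb E}[F(\Pi^*_k)^2]\le 2$) to get $F(\Pi^*_k)/k\to 0$ a.s. All steps check out, including the transfer along the random index $R_n\to\infty$ and the separate treatment of $\alpha=1$. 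The trade-off: the paper's reverse-submartingale argument is shorter and yields a.s.\ convergence of $F(\Pi_n)/n$ for arbitrary ISS virtual permutations (useful elsewhere in the paper), but is terse about how the limit $\alpha$ is extracted for $\alpha\neq 0$; your argument is longer but fully explicit precisely at that point, and it avoids martingale machinery in favour of elementary moment bounds and the SLLN.
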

\begin{proof} For any virtual permutation
$$\prob(F(\pi_{n-1})=k-1|F(\Pi_n)=k)=\frac{k}{n}\,, ~~\prob(F(\pi_{n-1})\geq k|F(\Pi_n)=k)=\frac{n-k}{n},$$
which readily implies that the sequence $F(\Pi_n)/n$ is a reverse submartingale, hence converges almost surely. For $\boldsymbol{\Pi}^*$ we have ${\mathbb E}[F(\Pi^*_n)]=1$ hence the limit fraction is $0$.
The limit for $\alpha\neq 0$ follows from this and the construction of  $\boldsymbol{\Pi^\alpha}$. \end{proof}

\begin{Theorem} \label{Theorem_Hamming}
The extreme Hamming-ISS virtual permutations are $\{\boldsymbol \Pi^{\alpha}, \, \alpha\in[0,1]\}$.
\end{Theorem}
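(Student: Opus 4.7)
The plan is to classify the extremes by combining Theorem \ref{Theorem_Martin_approximation}, the stochastic monotonicity of $|f_{\nu\downarrow n}(\mathfrak{U}_{\nu,r})|$ in $r$ (Lemma \ref{Lemma_Hamming_mono}), and the reverse-submartingale convergence from Lemma \ref{Lemma_Hamming_regularity}. The argument splits into two halves: every extreme Hamming-ISS virtual permutation is some $\boldsymbol{\Pi}^{\alpha}$, and conversely every $\boldsymbol{\Pi}^{\alpha}$ is itself extreme.

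For the first half, let $\boldsymbol{\Pi}$ be an extreme Hamming-ISS virtual permutation. Theorem \ref{Theorem_Martin_approximation} supplies a sequence $r(\nu)$ with $f_{\nu\downarrow n}(\mathfrak{U}_{\nu,r(\nu)})\to \Pi_n$ in distribution for every $n$. Since $r(\nu)/\nu\in[0,1]$, a subsequence satisfies $r(\nu)/\nu\to 1-\alpha$ for some $\alpha\in[0,1]$, and Lemma \ref{Lemma_Hamming_mono} sandwiches the distribution of $|f_{\nu\downarrow n}(\mathfrak{U}_{\nu,r(\nu)})|$ between its values at $r=\lfloor(1-\alpha-\eps)\nu\rfloor$ and $r=\lceil(1-\alpha+\eps)\nu\rceil$ for arbitrary $\eps>0$, so that the limit depends only on $\alpha$. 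To identify this limit with $\Pi^{\alpha}_n$ I would count extensions directly: for $\sigma\in S_n$ with $k$ fixed points, the $\pi\in S_\nu$ with $f_{\nu\downarrow n}(\pi)=\sigma$ and $F(\pi)=K:=\nu-r(\nu)$ are specified by choosing which $j$ of the $k$ fixed points of $\sigma$ remain fixed in $\pi$ ($\binom{k}{j}$ ways), which $K-j$ elements of $\{n+1,\dots,\nu\}$ are fixed in $\pi$ ($\binom{\nu-n}{K-j}$ ways), and a compatible derangement-type arrangement of the remaining $\nu-K$ elements respecting the non-fixed cycle structure of $\sigma$. Dividing by the total number of permutations of $[\nu]$ with $K$ fixed points and letting $\nu\to\infty$ with $K/\nu\to\alpha$, the hypergeometric factors converge to $\binom{k}{j}\alpha^j(1-\alpha)^{n-j}$ and the derangement contribution collapses to $1/(n-j)!$, reproducing exactly \eqref{eq_Hamming_coherent}.

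For the second half, the identities $\prob(F(\Pi_{n-1})=k-1\mid F(\Pi_n)=k)=k/n$ and $\prob(F(\Pi_{n-1})=k\mid F(\Pi_n)=k)=(n-k)/n$ used in the proof of Lemma \ref{Lemma_Hamming_regularity} depend only on the combinatorics of $f_n$, so they hold under every ISS law; thus $F(\Pi_n)/n$ is a bounded reverse submartingale and converges almost surely to a random limit $X\in[0,1]$ under any ISS measure. Since Lemma \ref{Lemma_Hamming_regularity} identifies $X=\alpha$ a.s.\ under $\boldsymbol{\Pi}^{\alpha}$, the family $\{\boldsymbol{\Pi}^{\alpha}\}$ is pairwise mutually singular. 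By Choquet uniqueness, each $\boldsymbol{\Pi}^\alpha$ admits a unique representation $\boldsymbol{\Pi}^\alpha=\int \boldsymbol{\Pi}^{\alpha'}\,d\mu(\alpha')$ as a mixture of extremes, which by the first half are themselves some $\boldsymbol{\Pi}^{\alpha'}$'s; the distribution of $X$ under this mixture equals $\mu$, and comparing with $X=\alpha$ a.s.\ on the left-hand side forces $\mu=\delta_\alpha$, so $\boldsymbol{\Pi}^\alpha$ is extreme.

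The main obstacle is the explicit projection count in the first half: one has to keep track of how the non-fixed elements of $\sigma$ are spliced into longer cycles of $\pi\in S_\nu$, verify that $f_{\nu\downarrow n}$ reconstructs $\sigma$ after the extensions, and run the asymptotic analysis with enough uniformity in $\sigma$. Stochastic monotonicity is the device that makes this feasible: it reduces the problem to a one-parameter family of Boltzmann measures parametrised by the limiting radius, and replaces a delicate uniform error estimate with a simple bracketing argument in $r$.
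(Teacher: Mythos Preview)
Your outline is correct, and the second half (mutual singularity via Lemma~\ref{Lemma_Hamming_regularity} plus Choquet uniqueness) matches the paper's argument. The first half, however, differs from the paper's monotonicity proof in a way that costs you the main simplification.

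You bracket $\mathfrak{U}_{\nu,r(\nu)}$ by $\mathfrak{U}_{\nu,\lfloor(1-\alpha\pm\eps)\nu\rfloor}$, which only shows the limit depends on $\alpha$ \emph{once} you know the Martin kernel converges along the regular sequences $r=\lfloor(1-\beta)\nu\rfloor$ and is continuous in $\beta$; hence you still need the explicit extension count you sketch. The paper's monotonicity proof (Section~\ref{Section_singletons_mono}) chooses a sharper comparison: it brackets by the virtual permutations $\boldsymbol{\Pi}^{\alpha\pm\epsilon}$ themselves. Because $F(\Pi_\nu^{\alpha+\epsilon})>\nu-r(\nu)$ with probability tending to $1$ (Lemma~\ref{Lemma_Hamming_regularity}), Lemma~\ref{Lemma_Hamming_mono} gives directly
\[
F(\Pi_n^{\alpha-\epsilon})\leq_{\rm st} F(\Pi_n)\leq_{\rm st} F(\Pi_n^{\alpha+\epsilon}),
\]
and the two ends are given by the closed formula~\eqref{eq_Hamming_coherent}, manifestly continuous in the parameter; sending $\epsilon\to 0$ finishes without any enumeration. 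The paper also gives an independent purely enumerative proof (Section~\ref{Section_singletons_count}), which avoids your general-$k$ count by the shortcut of Lemma~\ref{L1}: the recursion~\eqref{p-Ham} shows the whole probability function is determined by its $k=0$ values, so only the derangement case needs to be computed (Lemma~\ref{L2}). Your route is a valid hybrid of the two proofs, but it carries both the sandwich and the asymptotic enumeration, whereas either device alone suffices with the right bookkeeping.
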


In the sequel
we give two    proofs of Theorem \ref{Theorem_Hamming} using different techniques. The first  proof
in Section
\ref{Section_singletons_count}
is based on the explicit enumeration of spheres.
The second proof in Section \ref{Section_singletons_mono}
uses stochastic monotonicity.

\subsection{Proof through exact enumeration} \label{Section_singletons_count}

A starting point for a straight approach to extreme virtual permutations is the enumeration of Hamming
spheres. The sphere of radius $n$ is the set of {\it derangements}, which are permutations with no
fixed points. The number of derangements $d_n$ is given by
\begin{equation}\label{deM}
d_n= n!\sum_{j=0}^n \frac{(-1)^j}{j!}=\left\lfloor \frac{n!}{e}+\frac{1}{n}\right\rfloor.
\end{equation}
The first part of this formula is the classics due to de Montmort (1713), and the second  is found in \cite{Hassani}.
Denoting $D_{n,k}$  the number of permutations of $[n]$ with $k$ fixed points, we have
\begin{equation}\label{Dd}
D_{n,k}={n\choose k} d_{n-k}.
\end{equation}

Note that the elements comprising singleton cycles are precisely the fixed points of permutation.
If $F(\sigma)=k$ for $\sigma\in S_{n-1}$ then  $\sigma$ has $n-k-1$ extensions $\pi\in S_n$ with $F(\pi)=k$,  one extension with $F(\pi)=k+1$ and $k$ extensions with  $F(\pi)=k-1$.
Reciprocally,  enumerating projections for given  $\pi\in S_n$ with $F(\pi)=k$
yields
the recursion
\begin{equation}\label{D-Ham}
D_{n,k}=(n-k-1)D_{n-1,k}+D_{n-1,k-1}+(k+1)D_{n-1,k+1}, ~~~0\leq k\leq n,
\end{equation}
where $D_{1,0}=0, D_{1,1}=1$ and we adopt the convention $D_{n,j}=0$ for $j\in \{-1, n+1\}$. The
recursion implies  $D_{n, n-1}=0$, in accord with the fact that  permutation of $[n]$ cannot have
$n-1$ fixed points. There is some similarity  between (\ref{D-Ham}) and  a two-term recursion for
the Eulerian numbers counting descents \cite{GOEuler}, but these have very
different properties.

\bigskip

For $\boldsymbol{\Pi}=(\Pi_1,\Pi_2,\dots)$ a ISS virtual permutation, let $p_{n,k}=\prob(\Pi_n=\pi)$ be the probability of any given permutation $\pi\in S_n$ with $F(\pi)=n-|\pi|=k$ fixed points.
We call the bivariate array  ${\boldsymbol p}=(p_{n,k})$ the {\it probability function}.
By the rule of addition of probabilities
\begin{equation}\label{p-Ham}
p_{n,k}=(n-k)p_{n+1,k}+p_{n+1,k+1}+k p_{n+1,k-1},~~~k\in\{0,1,\ldots,n-2,n\},
\end{equation}
which is a backward recursion, dual to (\ref{D-Ham}). Every nonnegative solution to (\ref{p-Ham}) with $p_{1,1}=1$
is a probability function for some unique
 random virtual permutation.
Observe that
for $n$ fixed,
$D_{n,k}p_{n,k}, 0\leq k\leq n$, is the distribution of  the number of fixed points $F(\Pi_n)=n-|\Pi_n|$,  in particular
$$\sum_{k=0}^n D_{n,k}p_{n,k}=1,$$
and $D_{n,0}p_{n,0}=d_n p_{n,0}$ is the probability that $\Pi_n$ is a derangement.

Now we wish to explore the limit distributions which can arise in Theorem
\ref{Theorem_Martin_approximation}. To that end, for integer $\nu$ and $0\leq \varkappa<\nu-1$ let
\begin{equation}\label{incomplete}
p^{\nu,\varkappa}_{n ,  k }=\prob(\Pi_n^*=\pi\,|\, F(\Pi^*_\nu)=\varkappa), {\rm~~~where~~}    \pi\in S_n, F(\pi)=k.
\end{equation}
In terms of the Markov chain $(F({\Pi}^*_n), n=1,2,\dots)$
$$
D_{n,k}p^{\nu,\varkappa}_{n ,  k } =\prob(F(\Pi_n^*)=k\,|\, F(\Pi^*_\nu)=\varkappa)
$$
is the backward transition probability (also for any other ISS  virtual permutation in place of
$\boldsymbol{\Pi}^*$). Viewed as a function of $n$ and $k$, $p^{\nu,\varkappa}_{n,k}$  is an
incomplete probability function  which satisfies (\ref{p-Ham}) for $n<\nu$ together with  the
boundary conditions $p^{\nu,\varkappa}_{\nu ,  \varkappa } =1/D_{\nu,\varkappa}$ and
$p^{\nu,\varkappa}_{\nu ,  k } =0$ for $k\neq\varkappa$.

Let $D_{n,k}^{\nu,\varkappa}$ be the number of extensions of a  given $\sigma\in S_n$ with
$F(\sigma)=k$ to any $\pi\in S_\nu$ with $F(\pi)=\varkappa$. Then
\begin{equation}\label{rat-Ham}
p^{\nu,\varkappa}_{n ,  k }= \frac{D_{n,k}^{\nu,\varkappa}}{D_{\nu,\varkappa}}\,, ~~~1\leq n\leq \nu,
\end{equation}
where the ratio is called the  {\it Martin kernel}. To find  probability functions
appearing as limits of the Martin kernel one needs to identify the regimes for
$\varkappa=\varkappa(\nu)$ which ensure convergence of (\ref{rat-Ham}) as
$\nu\to\infty$, for all fixed $n$ and $k$. In the case of convergence, we will say
that the limit probability function is {\it induced} by $\varkappa(\nu)$ as
$\nu\to\infty$. The following simple observation allows us to only focus  on the
probabilities of derangements.

\begin{Lemma}\label{L1} Every solution to {\rm (\ref{p-Ham})} is uniquely determined by
$(p_{n,0},~ n=1,2,\dots)$.
\end{Lemma}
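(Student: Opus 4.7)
The plan is a straightforward downward induction on $k$: show that once the row $(p_{n,j})_n$ is known for every $j\le k-1$, the recursion \eqref{p-Ham} forces the row $(p_{n,k})_n$. The base case $k=0$ is the given data.

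For the inductive step, I would apply \eqref{p-Ham} at the pair $(n,k-1)$ and solve for the only unknown, namely $p_{n+1,k}$:
\begin{equation*}
p_{n+1,k}\;=\;p_{n,k-1}\;-\;(n-k+1)\,p_{n+1,k-1}\;-\;(k-1)\,p_{n+1,k-2}.
\end{equation*}
By the inductive hypothesis every quantity on the right-hand side, sitting at index $\le k-1$, is determined by $(p_{m,0})_{m\ge 1}$, so the left-hand side is determined as well.

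The only bookkeeping point, which I expect to be the main (and really quite mild) obstacle, is to navigate the forbidden slot $j=n-1$ at which $p_{n,j}$ is undefined. The recursion applied at $(n,k-1)$ is legitimate exactly when $k-1\neq n-1$, i.e.\ $n\neq k$. Splitting this into the two surviving regimes: for $n\ge k+1$ the displayed identity recovers $p_{m,k}$ for all $m=n+1\ge k+2$; and for $n=k-1$ the coefficient $(n-k+1)$ vanishes, so the undefined quantity $p_{k,k-1}$ drops out and the identity collapses to
\begin{equation*}
p_{k,k}\;=\;p_{k-1,k-1}\;-\;(k-1)\,p_{k,k-2},
\end{equation*}
which recovers the ``identity-permutation slot'' $p_{k,k}$. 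The only pair $(m,k)$ never needed on the left-hand side is $(k+1,k)$, which is precisely the inadmissible slot.

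Thus both disconnected pieces of the row at height $k$, namely the isolated value $p_{k,k}$ and the tail $\{p_{m,k}:m\ge k+2\}$, are uniquely determined by the quantities at heights $<k$, completing the induction. The argument is purely algebraic once the index bookkeeping is in place.
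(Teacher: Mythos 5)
Your proof is correct and is essentially the paper's own argument: rewrite \eqref{p-Ham} so as to isolate the entry of largest second index and induct on $k$ (the paper solves for $p_{n+1,k+1}$ from the equation at $(n,k)$; you solve for $p_{n+1,k}$ from the equation at $(n,k-1)$ --- the same identity reindexed). Your handling of the forbidden slot $k=n-1$ and of the diagonal entries $p_{k,k}$ is in fact more explicit than the paper's one-line ``obvious by induction''; the only residual point, shared with the paper, is that the seed $p_{1,1}$ of the diagonal is supplied by the normalization of a probability function rather than by the recursion itself.
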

\begin{proof}  Re-write the recursion as
 $p_{n+1,k+1}=p_{n,k}-(n-k)p_{n+1,k}-kp_{n+1,k-1}$, and the conclusion is obvious by induction in $k$.
\end{proof}

Now,
 $D_{n,0}^{\nu,\varkappa}$ counts  extensions of  a given derangement $\sigma\in S_n$  to a permutation $\pi\in S_\nu$ with $\varkappa$ fixed points.
Clearly,  $D_{n,0}^{\nu,\varkappa}=0$ if $\varkappa>\nu-n$. Otherwise, out of $\nu-n$ elements
 added to $\sigma$ some $m\leq \nu-n-\varkappa$ elements, say $a_1<\dots<a_m$
are allocated within the cycles present
in $\sigma$, and there are $n(n+1)\cdots (n+m-1)$ such allocations.
The other $\nu-n-m$ elements $\{n+1,\dots,\nu\}\setminus\{a_1,\dots,a_m\}$
form new cycles of which $\varkappa$ are singletons.
Thus using (\ref{Dd}) we obtain

\begin{eqnarray*}
 D_{n,0}^{\nu,\varkappa}=\sum_{m=0}^{\nu-n-\varkappa} {\nu-n\choose m} \frac{(n+m-1)!}{(n-1)!}\,D_{\nu-n-m,\varkappa}=\\
\sum_{m=0}^{\nu-n-\varkappa} {\nu-n\choose m} \frac{(n+m-1)!}{(n-1)!} {\nu-n-m\choose \varkappa}   \,d_{\nu-n-m- \varkappa}=\\
\frac{(\nu-n)!}{\varkappa!} \sum_{m=0}^{\nu-n-\varkappa} {n+m-1\choose m}\frac{d_{\nu-n-m- \varkappa}}{(\nu-n-m- \varkappa)!}\,.
\end{eqnarray*}

\begin{Lemma}\label{L2} As $\nu\to\infty$, the ratios {\rm(\ref{rat-Ham})} converge  for  $k=0$ (hence also for $k\geq 0$)   if and only if $\varkappa/\nu\to\alpha$ for some $\alpha\in[0,1]$.
In this case

\begin{equation}\label{pbeta0}
p^{\nu,\varkappa}_{n ,0} \to  \frac{(1-\alpha)^n}{n!}\,.
\end{equation}

\end{Lemma}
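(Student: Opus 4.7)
The plan is to work directly from the closed form for $D_{n,0}^{\nu,\varkappa}$ already derived in the excerpt, identify the dominant combinatorial term via a hockey-stick identity, and then exploit the classical de Montmort asymptotic $d_j/j! \to e^{-1}$ to read off the limit. Write $N = \nu - \varkappa$, so $D_{\nu,\varkappa} = \binom{\nu}{\varkappa} d_N$, and substitute $j = N - n - m$ in the displayed formula to get
\begin{equation*}
 D_{n,0}^{\nu,\varkappa} \;=\; \frac{(\nu-n)!}{\varkappa!}\sum_{j=0}^{N-n}\binom{N-j-1}{n-1}\frac{d_j}{j!}\,,
\end{equation*}
so that, after cancelling the factorials in $D_{\nu,\varkappa}$,
\begin{equation*}
 p^{\nu,\varkappa}_{n,0} \;=\; \frac{N!}{(\nu)_n\, d_N}\sum_{j=0}^{N-n}\binom{N-j-1}{n-1}\frac{d_j}{j!}\,,
\end{equation*}
with $(\nu)_n = \nu(\nu-1)\cdots(\nu-n+1)$.

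For the forward implication suppose $\varkappa/\nu \to \alpha$ and consider first $\alpha\in[0,1)$, so $N\to\infty$. I would substitute $d_j/j! = e^{-1} + \eps_j$ with the sharp bound $|\eps_j| \le 1/(j+1)!$ coming from the alternating series in (\ref{deM}), and split the sum into its leading part and a remainder. The leading part is
\begin{equation*}
 \frac{1}{e}\sum_{j=0}^{N-n}\binom{N-j-1}{n-1} \;=\; \frac{1}{e}\sum_{i=n-1}^{N-1}\binom{i}{n-1} \;=\; \frac{1}{e}\binom{N}{n}\,,
\end{equation*}
by the hockey-stick identity. The remainder is bounded by $\binom{N-1}{n-1}\sum_{j\ge 0} 1/(j+1)!$, hence is $O(N^{n-1})$, which is of smaller order than the main term $\binom{N}{n}\asymp N^n$. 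Using $d_N/N! \to e^{-1}$ and $(N)_n/(\nu)_n \to (1-\alpha)^n$ then gives
\begin{equation*}
 p^{\nu,\varkappa}_{n,0} \;\sim\; \frac{(N)_n/n!}{(\nu)_n\,(d_N/N!)\,e} \;\longrightarrow\; \frac{(1-\alpha)^n}{n!}\,.
\end{equation*}
The boundary case $\alpha = 1$ I would treat separately: either $N$ stays bounded, in which case $D_{n,0}^{\nu,\varkappa} = 0$ for all $\nu$ large enough (since $N<n$), or $N\to\infty$ with $N/\nu\to 0$ and the same estimate yields $p^{\nu,\varkappa}_{n,0} = O((N/\nu)^n) \to 0$, matching $(1-1)^n/n! = 0$.

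For the converse I would use a compactness argument. If $p^{\nu,\varkappa}_{n,0}$ converges for every $n$, select any subsequence along which $\varkappa/\nu \to \alpha\in[0,1]$ (possible by compactness of $[0,1]$); the forward direction forces the limit to be $(1-\alpha)^n/n!$. Two distinct subsequential limits $\alpha_1\neq\alpha_2$ would yield different values at $n=2$, namely $(1-\alpha_1)^2/2\neq (1-\alpha_2)^2/2$, contradicting convergence. Hence $\varkappa/\nu$ itself converges. The parenthetical remark that convergence at $k=0$ forces convergence for all $k\ge 0$ is just Lemma~\ref{L1} applied along the limiting sequence.

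The main obstacle is the error estimate: one needs $\sum_j \binom{N-j-1}{n-1}\eps_j$ to be genuinely negligible beside $\binom{N}{n}/e$, and this relies on the very fast decay of $\eps_j$ so that only small $j$ contribute while $\binom{N-j-1}{n-1}$ is still $O(N^{n-1})$ there. The $\alpha=1$ case is slightly delicate because $d_N/N!\to e^{-1}$ need not apply, but the prefactor $1/(\nu)_n$ kills the expression regardless, so the uniform statement survives.
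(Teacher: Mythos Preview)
Your argument is correct and follows the same route as the paper: form the ratio from the closed expression for $D_{n,0}^{\nu,\varkappa}$, replace $d_j/j!$ by its limit $e^{-1}$, and estimate the resulting sum of binomial coefficients. Where the paper sketches this last step informally as a Riemann-sum approximation
\[
\frac{1}{\nu^n}\sum_{m=0}^{\nu-n-\varkappa}\binom{n+m-1}{m}\;\sim\;\int_0^{1-\varkappa/\nu}\frac{x^{n-1}}{(n-1)!}\,dx,
\]
you instead evaluate the binomial sum exactly via the hockey-stick identity and bound the $d_j/j!-e^{-1}$ remainder explicitly; this makes your version more rigorous, and you also spell out the boundary case $\alpha=1$ and the compactness argument for the converse, both of which the paper leaves to the reader.
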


\begin{proof} Applying (\ref{deM}) and (\ref{Dd}) and using the above calculation
for  $D_{n,0}^{\nu,\varkappa}$, after cancelations we get as $\nu\to\infty$

$$
p^{\nu,\varkappa}_{n ,0}= \frac{D_{n,0}^{\nu,\varkappa}}
{D_{\nu,\varkappa}}\sim
\frac{1}{\nu^n} \sum_{m=0}^{\nu-n-\varkappa} {n+m-1\choose m}\sim
 \int_0^{1-\varkappa/\nu} \frac{x^{n-1}}{(n-1)!}\,dx=\frac{(1-\varkappa/\nu)^n}{n!},
$$
thus the convergence   holds if  and only if $\varkappa/\nu\to \alpha$ for some $\alpha\in[0,1]$.
\end{proof}

We see that there exists a unique probability function  ${\boldsymbol
p}(\alpha)=(p_{n,k}(\alpha))$,  with  $p_{n,0}(\alpha)=(1-\alpha)^n/n!$. Comparing with
\eqref{eq_Hamming_coherent} we conclude that ${\boldsymbol p}(\alpha)$ is the probability function
of the virtual permutation $\boldsymbol \Pi^{\alpha}$.

\begin{proof}[Proof of Theorem \ref{Theorem_Hamming}] Since ${\boldsymbol p}(\alpha)$'s (corresponding to $\boldsymbol \Pi^{\alpha}$) are the only possible limits in the context of Theorem \ref{Theorem_Martin_approximation}, the family contains all extremes.
On the other hand, by Lemma \ref{Lemma_Hamming_regularity}, $F(\boldsymbol
\Pi_n^\alpha)/n\to \alpha$ a.s. Thus the supports of distributions corresponding to different $\alpha$ are
disjoint, and each ${\boldsymbol p}(\alpha)$ is extreme.
\end{proof}


\subsection{Proof through monotonicity} \label{Section_singletons_mono}

For the second proof we recall the notion of stochastic order.

\begin{Definition} For real random variables $\xi$ and $\eta$ we say that
  $\xi$ is stochastically larger than $\eta$, denoted   $\xi\geq_{\rm st}\eta$ if either of the two
 equivalent properties hold:
 \begin{itemize} \item[\rm(a)] For each $x\in\mathbb R$, we have $\prob (\xi \ge x)\ge \prob (\eta\ge x)$,
\item[\rm(b)] ${\mathbb E}[u(\xi)]\geq {\mathbb E}[u(\eta)]$ for every nondecreasing function $u$.
 \end{itemize}
\end{Definition}
\noindent
If $\xi\geq_{\rm st}\eta$, then it is possible to define distributional copies of
these variables, say $\xi'$ and $\eta'$, on the same probability space in such a way
that $\xi'\ge \eta'$ almost surely.

\begin{Lemma} \label{Lemma_Hamming_mono}
 Let $\Pi$ and $\Pi'$ be two random Hamming--spherically symmetric permutations in $S_\nu$, such that
$F(\Pi)\geq_{\rm st} F(\Pi').$ Then also $F(f_{\nu\downarrow n}(\Pi))\geq_{\rm st}  F(f_{\nu\downarrow n}(\Pi'))$  for $n<\nu$.
\end{Lemma}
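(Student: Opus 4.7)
The plan is to reduce to a one-step projection and then exhibit stochastic monotonicity of the associated one-step Markov kernel. Iterating $f_\nu \circ f_{\nu-1} \circ \dots \circ f_{n+1}$, and using that Hamming-spherical symmetry is preserved by each $f_m$ (by the consistency condition \eqref{recPi}), an induction on $\nu - n$ reduces the claim to the case $n = \nu - 1$. For a spherically symmetric $\Pi \in S_\nu$, conditionally on $F(\Pi) = k$ the permutation $\Pi$ is uniform on the sphere, so the conditional law of $F(f_\nu(\Pi))$ given $F(\Pi) = k$ is a universal Markov kernel $P_k$ depending only on $k$ and $\nu$. Once $k \mapsto P_k$ is known to be stochastically nondecreasing on the support, the lemma follows by the standard tower argument: for any nondecreasing $u$, the function $G(k) := \int u \, dP_k$ is nondecreasing, so $F(\Pi) \geq_{\rm st} F(\Pi')$ gives
\[
{\mathbb E}[u(F(f_\nu(\Pi)))] = {\mathbb E}[G(F(\Pi))] \geq {\mathbb E}[G(F(\Pi'))] = {\mathbb E}[u(F(f_\nu(\Pi')))].
\]

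The substantive step is therefore to establish $P_k \geq_{\rm st} P_{k-1}$ for adjacent valid $k$'s (transitivity of $\geq_{\rm st}$ then yields the full monotonicity). Deleting $\nu$ from the cycle notation shifts $F$ by $-1$, $0$, or $+1$ according as $\nu$ is a fixed point, a member of a cycle of length $\geq 3$, or a member of a $2$-cycle, so $P_k$ is supported in $\{k-1, k, k+1\}$ with $P_k(\{k-1\}) = k/\nu$ (already recorded in Lemma \ref{Lemma_Hamming_regularity}) and, by a short enumeration, $P_k(\{k+1\}) = (\nu-k)(\nu-k-1) d_{\nu-k-2}/(\nu\, d_{\nu-k})$. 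Verifying the tail inequality $P_k([t, \infty)) \geq P_{k-1}([t, \infty))$ at each threshold $t$ reduces, after cancellation, to $(\nu-k+1) d_{\nu-k-1} \leq d_{\nu-k+1}$, which follows from the derangement recurrence $d_m = (m-1)(d_{m-1} + d_{m-2})$.

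The main obstacle is the gap in the support $\{0, 1, \ldots, \nu-2, \nu\}$: since $S_\nu$ has no permutation with $\nu - 1$ fixed points, the value $k = \nu$ has no neighbour in the support, and its comparison with $k' = \nu - 2$ must be made directly. In this boundary case $P_\nu = \delta_{\nu - 1}$ while $P_{\nu - 2}$ puts mass $(\nu-2)/\nu$ at $\nu-3$ and $2/\nu$ at $\nu-1$, and the tail comparison is immediate. Apart from this gap, the argument is a routine lifting of stochastic order through a stochastically monotone kernel.
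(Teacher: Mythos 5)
Your proof is correct and follows essentially the same strategy as the paper: reduce to the one-step projection $f_\nu$, exploit that conditional uniformity on spheres makes the law of $F(f_\nu(\Pi))$ given $F(\Pi)=k$ a universal kernel, check stochastic monotonicity of that kernel for adjacent values of $k$ (handling the gap at $k=\nu$ separately), and lift the order through the monotone kernel. The only point of divergence is how the single nontrivial tail inequality is verified: the paper bounds $\prob(F(f_\nu(\Pi'))=k+1)$ from above softly, by subtracting the probability that $\nu$ lies in a full-length $(\nu-k)$-cycle (an event disjoint from the two-cycle event) and showing that probability is at least $1/\nu$, whereas you compute the two-cycle probability exactly as $(\nu-k)(\nu-k-1)d_{\nu-k-2}/(\nu d_{\nu-k})$ and close the argument with the derangement recurrence $d_m=(m-1)(d_{m-1}+d_{m-2})$; both computations check out (for the exact route one also needs the easy bound $d_{m-2}\le (m-1)d_{m-1}$, valid on the relevant range $\nu-k\ge 2$).
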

\begin{proof}
It is sufficient to prove the relation for $n=\nu-1$, with  $F(\Pi)$ and $F(\Pi')$  some given nonrandom values.
Excluding the trivial case  $F(\Pi)=\nu$ of the identity permutation,  we further reduce to the case  $F(\Pi')=k, F(\Pi)=k+1$ with $0\leq k<\nu-2$.
The general case will follow by induction and using the fact that the stochastic order is preserved by convex mixtures.

We have   $F(f_\nu(\Pi))\in \{k,k+1,k+2\}$ and   $F(f_\nu(\Pi'))\in \{k-1,k,k+1\}$, thus  to show that
$F(f_\nu(\Pi))$  is stochastically larger than $F(f_\nu(\Pi'))$
we only need to check that
 \begin{equation}
 \label{eq_x1}
 \prob (F(f_\nu(\Pi'))=k+1)\le \prob (F(f_\nu(\Pi))\in
 \{k+1,k+2\}).
 \end{equation}
Since $\Pi$ is uniformly distributed over permutations with $k+1$ fixed points,  and
since the event in the right-hand side  of (\ref{eq_x1}) occurs precisely when $\nu$
is not a  fixed point, the  probability of this event is $(\nu-k-1)/\nu$. Likewise,
the event on the left-hand side implies that  $\nu$ is neither a fixed point nor
belongs  to a $(\nu-k)$-cycle of $\Pi'$. The probability that $\nu$ is a fixed point
of $\Pi'$ is $k/\nu$. The probability that $\Pi'$ has a $(\nu-k)$-cycle containing
$\nu$ is
$$\frac{(\nu-k)}{\nu}\, \frac{(\nu-k-1)!}{d_{\nu-k}}<\frac{(\nu-k)}{\nu} \frac{(\nu-k-1)!}{(\nu-k)!}=\frac{1}{\nu},$$
where we used the obvious bound $d_{\nu-k}<(\nu-k)!$ along with the fact that given
the cycle structure $\nu$ is equally likely to occupy any position within the
cycles. Now (\ref{eq_x1}) follows:
$$
\prob (F(f_\nu(\Pi'))=k+1)\le 1-\frac{k}{\nu}-\frac{1}{\nu}=
\frac{\nu-k-1}{\nu}=\prob (F(f_\nu(\Pi))\in
 \{k+1,k+2\}).$$\end{proof}


\begin{proof}[Proof of Theorem \ref{Theorem_Hamming}]

Let $\boldsymbol{\Pi}=(\Pi_n, n=1,2,\dots)$ be extreme Hamming-ISS virtual
permutation. There exists a sequence $\varkappa(\nu)$ such that $\boldsymbol{\Pi}$
is representable as a limit of ${\mathfrak{U}}_{\nu, \nu-\varkappa(\nu)}$ as in
Theorem \ref{Theorem_Martin_approximation} (where $r(\nu)=\nu-\varkappa(\nu)$).
Passsing if necessary to a subsequence we may assume that $\varkappa(\nu)/\nu\to
\alpha$ as $\nu\to\infty$ for some $\alpha\in [0,1]$.

Suppose first $\alpha\in (0,1)$ and  choose $0<\epsilon<\min(\alpha,1-\alpha)$.
As $\nu\to\infty$, $F(f_{\nu\downarrow n}\, {\mathfrak{U}}_{\nu, \nu-k(\nu)})$
converges in distribution to $F(\Pi_n)$, and by Lemma \ref{Lemma_Hamming_regularity}
the probability of relation $F(\Pi_\nu^{\alpha+\epsilon})>\varkappa(\nu)$ approaches
1. Likewise, the probability of relation
$F(\Pi_\nu^{\alpha-\epsilon})<\varkappa(\nu)$ approaches 1. Invoking  Lemma
\ref{Lemma_Hamming_mono} we obtain for projections
$$F(\Pi_n^{\alpha-\epsilon})\leq_{\rm st} F(\Pi_n) \leq_{\rm st} F(\Pi_n^{\alpha+\epsilon}).$$
By continuity in the parameter both bounds  converge in distribution to $F(\Pi_n^\alpha)$ as $\epsilon\to 0$. Thus $F(\Pi_n)$ has the same distribution
as $F(\Pi_n^\alpha)$,  implying that  $\Pi_n$ and $\Pi_n^\alpha$ have the same distribution for every $n$. It follows that $\boldsymbol{\Pi}$ has the same distribution as $\boldsymbol{\Pi}^\alpha$.

The edge cases $\alpha\in\{0,1\}$ are treated similarly, with one-sided bounds derived from ${\Pi}_\nu^{\epsilon}$ and ${\Pi}_\nu^{1-\epsilon}$, respectively.

It follows that  virtual permutations $\{\boldsymbol{\Pi}^\alpha, ~\alpha\in
[0,1]\}$ are the only possible limits in Theorem \ref{Theorem_Martin_approximation}.
Since their supports are disjoint by Lemma \ref{Lemma_Hamming_regularity}, this is
the set of extremes.
\end{proof}

\subsection{Complements}

\paragraph{1.} For $\tilde{\boldsymbol{\Pi}}^\alpha$
the   bivariate process counting  singular and regular fixed points is a  Markov chain with transition probabilities
at step $n$ being

\begin{eqnarray*}
(s, r)
\to\begin{cases}
(s+1,r) {\rm  ~w.p.~} \alpha\\
(s,r+1) {\rm  ~w.p.~} \frac{1-\alpha}{n-s}\\
(s,r-1){\rm  ~w.p.~} \frac{(1-\alpha)r}{n-s}\\
(s, r){\rm  ~~~~~~w.p.~}  \frac{(1-\alpha)(n-1-r-s)}{n-s}.
\end{cases}
\end{eqnarray*}
For the count of fixed points $F(\Pi_n^\alpha)$
the transition probabilities are more involved.

\paragraph{2.} With ISS virtual permutation $\boldsymbol{\Pi}$ one can uniquely associate a partition of the infinite set $\mathbb N$, by assigning integers $i$ and $j$ to the same block if they  belong to the same cycle of $\Pi_n$ for $n\geq\max(i,j)$.
This partition is exchangeable, that is has distribution invariant under bijections ${\mathbb N}\to{\mathbb N}$ moving finitely many elements.
Partitions with nonzero frequency of singletons (dust component)  appear as intermediate states in exchangeable coalescence processes (e.g. \cite{GIM}).

The distribution of exchangeable partition is determined by  the exchangeable partition probability function (EPPF), see  \cite{CSP}.
Our classification of ISS virtual permutations can be recast
as the characterisation  of partitions of $\mathbb N$ with EPPF
of the form
$$p(n_1,\dots,n_\ell)=p_{n,k} \prod_{j=1}^\ell (n_j-1)!,$$
where $n_1,\dots,n_\ell$ is a partition of integer $n$ and $k$ is the multiplicity of part 1.

\paragraph{3.}  It is well known that
for the uniform $\Pi_n^*$, the sequence of cycle sizes arranged in non-increasing order and
normalised by $n$ converges weakly to the Poisson-Dirichlet  distribution with parameter 1
\cite{ABT, CSP}. The same limit holds for $\Pi_n^\alpha$, provided the
cycle sizes are normalised by $(1-\alpha)n$.

\section{Kendall-tau spherical symmetry}

\subsection{Inversions in permutation}
 The Kendall-tau distance  between $\pi$ and $\sigma$ in $S_n$
is the number of discordant pairs,  i.e. positions $i<j$ with
$\sgn(\pi(i)-\pi(j))=-\sgn(\sigma(i)-\sigma(j))$.
When $\sigma$ is the identity permutation,   discordant pair $i<j$  is   inversion in $\pi$, thus $|\pi|$ coincides with the
number of inversions
$$I(\pi):=\#\{(i,j): 1\leq i<j\leq n: \pi(i)>\pi(j)\}.$$
In this setting  spherical symmetry means that permutations of $[n]$ with the same number of inversions have equal probability.

Viewing permutation $\pi\in S_n$ as a linear order on the set of {\it positions} and using the one-line notation $(\pi(1),\dots,\pi(n))$,  we  understand $\pi(j)$ as the rank
of element $j$ among $[n]$ (so $\pi(j)=1$ if $j$ is the minimal element in the order). It is useful to observe that for the inverse permutation $I(\pi^{-1})=I(\pi)$, which suggests two systems of projections, each consistent
with the spherical symmetry:

\begin{itemize}
\item[(i)] $f'_n(\pi)$ deletes the last entry $\pi(n)$ , and re-labels  $\pi(1),\dots,\pi(n-1)$ by an increasing bijection with $[n-1]$.
\item[(ii)] $f''_n(\pi)$ deletes letter $n$ from the one-line notation.
\end{itemize}
 For instance,
$$f_5': (2,5,1,4,3)\mapsto (2,4,1,3), ~~~~f_5'': (2,5,1,4,3)\mapsto (2,1,4,3).$$
The projections are mapped into one another by the group inversion.

For convenience we will work with  projection $f'_n$, which may be also seen as the restriction of
order from $[n]$ to $[n-1]$. The advantage of this choice of projection is that the set of
inversions within $[n]$ remains unaltered as the permutation gets extended. Furthermore, the
mapping $(\pi(1),\dots,\pi(n))\mapsto (n-\pi(1),\dots,n-\pi(n))$ yields the inverse order relation,
hence also consistent with the projections.

A virtual permutation $\boldsymbol{\pi}=(\pi_1,\pi_2,\dots)$ in this setting  is a system of consistent orders on $[n]$  for $n=1,2,\dots$, hence  defines a linear order on $\mathbb N$.
Note that $\pi_{n+1}(j)-\pi_n(j)$ is $1$ or $0$ depending on whether $\pi_{n+1}(n+1)\leq j$ or not.
The order defined by $\boldsymbol{\pi}$ is a well-order (i.e. isomorphic to $({\mathbb N},\leq$)) if and only if $\pi_n(j)$ has a terminal value as $n$ increases, for every $j$.

Observe that
 $I(\pi)=\sum_{j=1}^n \eta_j$, where
 $\eta_j:=\#\{i: 1\leq i<j,~ \pi(i)>\pi(j)\}.$
The mapping  $\pi\mapsto\eta_1,\dots,\eta_n$   is a bijection called the {\it   Lehmer code}.
For instance, $(2,5,1,4,3)$ is encoded into  $0,0,2,1,2$.
In terms of the Lehmer code, $f_n'$  acts as  the coordinate projection which sends $\eta_1,\dots,\eta_{n-1}, \eta_{n}$ to $\eta_1,\dots,\eta_{n-1}$.
The consistency with the  Kendall-tau distance is now easily seen.
Indeed,
if $\pi\in S_n$  is an extension of $\sigma\in S_{n-1}$, the counts of inversions are related as $I(\pi)=I(\sigma)+n-\pi(n)=  I(\sigma)+\eta_n$.
 The idea of the Lehmer code generalises to the infinite setting, allowing us to encode the  order on $\mathbb N$ in a single string $\eta_1,\eta_2,\dots$.

\subsection{Mallows distributions}

For the virtual permutation $\boldsymbol{\Pi}^*$ the terms of the Lehmer code $\eta_1,\eta_2,\dots$
are independent, with  $\eta_j$ being uniformly distributed on $\{0,1, \dots, j-1\}$. This connects
the problem of classification of the ISS permutations to the study of conditional laws and tail
algebras for sums of independent random variables, see \cite{Aldous, Mineka} and especially
\cite{BH}.

Exponential tilting of  the uniform distribution yields a truncated geometric distribution with masses $q^{i}/[j]_q~$ $(0\leq i\leq j-1)$, where $[j]_q:=(1-q^j)/(1-q)$.
The tilted joint distribution of $\eta_1,\dots,\eta_n$ conditional on $\eta_1+\dots+\eta_n$ does not depend on $q$, hence the corresponding permutation is Kendall-tau spherically symmetric,
with distribution
\begin{equation}\label{Mallows}
{\mathbb P}(\Pi_n=\pi)=  \frac{q^{I(\pi)}}{[n]_q!\,}, ~~\pi\in S_n,
\end{equation}
where $[n]_q!:=\prod_{j=1}^n [j]_q$.  This is the {\it Mallows distribution} on
permutations. The parameter range is $q\in [0,\infty]$, where the edge cases $0$ and
$\infty$ correspond to the deterministic identity and the decreasing permutation
$(n, n-1,\dots,1)$, respectively.

Under the Mallows distribution with $q<1$ the virtual permutation determines a well-order
on $\mathbb N$. Passing to the
inverse order relation yields a Mallows distribution with parameter $1/q$. See \cite{BP, GO}
for further properties of the Mallows permutations.

\begin{Theorem} \label{Theorem_Mallows}
Mallows distributions {\rm (\ref{Mallows})} with $q\in[0,\infty]$ and only they are the extreme ISS
virtual  permutations with respect to the Kendall-tau distance.
\end{Theorem}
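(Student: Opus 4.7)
The plan is to follow the monotonicity-based scheme from the second proof of Theorem \ref{Theorem_Hamming}, replacing fixed points by inversions throughout. The three ingredients to assemble are (i) the already-advertised stochastic monotonicity Lemma \ref{Lemma_Mallows_mono}, (ii) a regularity lemma pinning down the Mallows virtual permutations by an almost-sure signature, and (iii) the Martin boundary sandwich argument. Here and below $\boldsymbol{\Pi}^q$ denotes the virtual permutation whose Lehmer digits $\eta_1,\eta_2,\dots$ are independent with $\eta_j$ of distribution $q^i/[j]_q$ on $\{0,\ldots,j-1\}$, so that $\Pi_n^q$ is Mallows$(q)$-distributed for every $n$.

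I would first prove the monotonicity lemma: if $\Pi,\Pi'\in S_\nu$ are Kendall-tau spherically symmetric with $I(\Pi)\geq_{\rm st}I(\Pi')$, then $I(f'_{\nu\downarrow n}(\Pi))\geq_{\rm st}I(f'_{\nu\downarrow n}(\Pi'))$ for $n<\nu$. By the same reduction used for Lemma \ref{Lemma_Hamming_mono} the problem collapses to $n=\nu-1$ with $I(\Pi')=k$, $I(\Pi)=k+1$. In Lehmer-code terms $I(\Pi)-I(f'_\nu(\Pi))=\eta_\nu$, and under spherical symmetry the law of $(\eta_1,\ldots,\eta_\nu)$ is the joint law of independent variables $\eta_j$ uniform on $\{0,\ldots,j-1\}$ conditioned on their sum. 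What is needed is stochastic monotonicity of the conditional marginal of $\eta_\nu$ as the sum increases -- a log-concavity/MTP$_2$ property of convolutions of initial-segment uniform distributions; equivalently, the Mallows family (\ref{Mallows}) is a monotone-likelihood-ratio exponential family in $q$ with sufficient statistic $I$, and the two sphere-uniform laws under consideration appear as its $I=k$ and $I=k+1$ conditional laws, inheriting the ordering of marginals from the MLR property. This is the delicate combinatorial heart of the argument, since unlike the Hamming case the projection can drop the inversion count by any amount in $\{0,\ldots,\nu-1\}$ and the direct three-case inequality of Lemma \ref{Lemma_Hamming_mono} is unavailable.

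The regularity lemma is straightforward: since the Lehmer digits of $\boldsymbol{\Pi}^q$ are independent and $\eta_j$ is truncated-geometric of parameter $q$, for $q<1$ Kolmogorov's strong law gives $n^{-1}\sum_{j=1}^n \mathbf{1}\{\eta_j=0\}\to 1-q$ almost surely; the case $q>1$ is symmetric via $\eta_j=j-1$, and $q\in\{0,\infty\}$ are deterministic. In particular the laws of $\boldsymbol{\Pi}^q$ for different $q\in[0,\infty]$ have pairwise disjoint supports, while continuity of $q\mapsto\mathrm{law}(\Pi_n^q)$ on each $S_n$ is obvious from (\ref{Mallows}).

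Finally, for the sandwich step, let $\boldsymbol{\Pi}$ be an extreme Kendall-tau ISS virtual permutation. Theorem \ref{Theorem_Martin_approximation} furnishes spheres $\mathfrak{U}_{\nu,r(\nu)}$ projecting to $\boldsymbol{\Pi}$; choose $q(\nu)\in[0,\infty]$ so that the Mallows$(q(\nu))$ law on $S_\nu$ has expected inversion count $r(\nu)$ and pass to a subsequence with $q(\nu)\to q$. For small $\eps>0$ the strong law under $\boldsymbol{\Pi}^{q\pm\eps}$ makes
\[
I(\Pi_\nu^{q-\eps})\leq r(\nu)\leq I(\Pi_\nu^{q+\eps})
\]
hold with probability tending to $1$, and Lemma \ref{Lemma_Mallows_mono} propagates this to $I(\Pi_n^{q-\eps})\leq_{\rm st}I(\Pi_n)\leq_{\rm st}I(\Pi_n^{q+\eps})$ for each fixed $n$. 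Letting $\eps\downarrow 0$ and invoking continuity in $q$ gives $I(\Pi_n)\stackrel{d}{=}I(\Pi_n^q)$, and spherical symmetry upgrades this to $\Pi_n\stackrel{d}{=}\Pi_n^q$ for every $n$, so $\boldsymbol{\Pi}\stackrel{d}{=}\boldsymbol{\Pi}^q$. The disjointness of supports from the regularity lemma then shows that every Mallows virtual permutation is in turn extreme, completing the identification.
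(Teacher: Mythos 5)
Your overall architecture is exactly the paper's: stochastic monotonicity of the projected inversion count, an almost-sure law-of-large-numbers signature giving disjoint supports, and the Martin-boundary sandwich with Mallows$(q\pm\eps)$. The regularity step (fractions of zero and maximal Lehmer digits in place of the paper's asymptotics $I(\Pi_n)\sim \frac{q}{1-q}n$, etc.) and the mean-matching choice of $q(\nu)$ are harmless variants, and the final sandwich and extremality arguments are sound and match the paper's.

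The one substantive issue is your treatment of Lemma \ref{Lemma_Mallows_mono}, which you rightly call the heart of the matter but do not actually prove. Of the two justifications you offer, the second is a non sequitur: the fact that \eqref{Mallows} is an MLR exponential family in $q$ with sufficient statistic $I$ gives stochastic ordering of $I(\Pi)$ \emph{across different values of} $q$, but says nothing about how the conditional law of $V=I(f'_\nu(\Pi))$ given $V+U=s$ varies with $s$ (the conditional laws are $q$-free, so no property of the $q$-parametrization can order them); that is an Efron-type conditional monotonicity statement requiring a separate argument. Your first suggestion is the right one, but it needs less than you ask for: no log-concavity of the convolution $V$ is required, only that the \emph{last} digit $U=\eta_\nu$ is uniform (hence log-concave with interval support), so that $\prob(V=v\mid V+U=s)\propto \prob(V=v)\prob(U=s-v)$ is MLR-increasing in $s$ regardless of the law of $V$. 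This is precisely what the paper isolates as Lemma \ref{Lemma_stochastic_nonsense} and proves by a two-line ratio computation; with that lemma supplied (applied digit by digit down from $\nu$ to $n$, plus the standard reduction to adjacent spheres and mixtures), your proof is complete. As written, the key lemma rests on an assertion, one of whose proposed proofs would fail.
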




\noindent
The proof  hinges on the following analogue of Lemma \ref{Lemma_Hamming_mono}.

\begin{Lemma} \label{Lemma_Mallows_mono}
 Let $\Pi$ and $\Pi'$ be two random Kendall--tau--spherically symmetric permutations in $S_\nu$, such that
$I(\Pi)\geq_{\rm st} I(\Pi').$ Then also $I(f'_{\nu\downarrow n}(\Pi))\geq_{\rm st}  I(f'_{\nu\downarrow n}(\Pi'))$  for $n<\nu$.
\end{Lemma}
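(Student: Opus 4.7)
The plan is to follow the same broad structure as the proof of Lemma \ref{Lemma_Hamming_mono}. By iterating the projection one step at a time it suffices to take $n = \nu-1$, and since stochastic order is preserved under convex mixtures (and hence under conditioning on $I(\Pi)$ and $I(\Pi')$), the problem reduces further to the deterministic one-step case $I(\Pi') = s$, $I(\Pi) = s+1$ for an arbitrary $s \ge 0$; the general case then follows by chaining and induction on $I(\Pi) - I(\Pi')$.

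Once in this setting, I would pass to the Lehmer code $(\eta_1,\dots,\eta_\nu)$ of $\Pi$, under which the projection $f'_\nu$ simply drops $\eta_\nu$ and $I(\Pi) = \sum_{j=1}^\nu \eta_j$. Write $N(k,m)$ for the number of permutations in $S_m$ with $k$ inversions. Under uniform conditioning on $I(\Pi)=s$ the Lehmer code is uniform on its feasible set, so
\[
\prob(\eta_\nu = j \mid I(\Pi)=s) \;=\; \frac{N(s-j,\nu-1)}{N(s,\nu)}, \qquad j \in \{0,\dots,\nu-1\}.
\]
Setting $Y_s := I(f'_\nu(\Pi)) = s - \eta_\nu$, the unnormalised pmf of $Y_s$ is
\[
f_s(k) \;=\; N(k,\nu-1)\,\mathbf{1}\{s-\nu+1 \le k \le s\},
\]
that is, the fixed sequence $N(\cdot,\nu-1)$ restricted to a length-$\nu$ window that translates one step to the right when $s$ is replaced by $s+1$.

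The crucial observation is then that $f_{s+1}(k)/f_s(k)$ is nondecreasing in $k$: on the common support it equals $1$; at the single point $k=s-\nu+1$, which lies in the support of $f_s$ but not of $f_{s+1}$, it equals $0$; and at the single point $k=s+1$, which lies in the support of $f_{s+1}$ but not of $f_s$, it equals $+\infty$. This is the monotone likelihood ratio condition, which yields $Y_{s+1} \ge_{\rm st} Y_s$ by the standard criterion and closes the reduced one-step case.

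The point worth emphasising is that, in contrast to the Hamming case, no delicate combinatorial estimate of sphere sizes is required. The Lehmer code decouples $\eta_\nu$ from $(\eta_1,\dots,\eta_{\nu-1})$, so the unnormalised density of $Y_s$ depends on $s$ only through the position of its indicator window; the MLR then drops out tautologically. The only minor check is that truncation of the window to $[0,\binom{\nu-1}{2}]$ in the edge cases does not disturb this picture, which is immediate because the truncation only shortens the window endpoints that were about to contribute the $0$ or $+\infty$ in the ratio.
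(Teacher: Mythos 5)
Your proof is correct and follows essentially the same route as the paper: the paper's Lemma \ref{Lemma_stochastic_nonsense} is exactly the statement that conditioning the sum $V+U$ (with $U$ the last Lehmer coordinate, uniform and independent of the rest under the uniform measure) on a larger value pushes $V$ stochastically upward, and its proof is a direct tail-ratio verification of the same sliding-window phenomenon you capture via the monotone likelihood ratio criterion. The only cosmetic difference is that you finish with MLR where the paper uses the elementary monotonicity of $(a+x)/(b+x)$; the reductions (to a single projection step and to adjacent deterministic values of $I$) coincide.
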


\noindent
To show this we will need the following property of the uniform distribution.

\begin{Lemma} \label{Lemma_stochastic_nonsense} For $i=1,2,3$ let $(U_i,V_i)$ be pairs of integer random variables such that
\begin{itemize}
\item[\rm (i)]    $V_1$ and $U_1$  are independent, with $U_1$ uniformly  distributed on some integer interval,

\item[\rm(ii)] the conditional distribution of $(V_i,U_i)$ given $V_i+U_i$ is the same for $i=1,2,3$,
\item[\rm(iii)]  $V_2+U_2\geq_{\rm st} V_3+U_3$.
\end{itemize}
Then $V_2\geq_{\rm st}V_3$.
\end{Lemma}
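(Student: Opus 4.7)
The plan is to use hypothesis (i) to identify the common conditional law of $V_i$ given $V_i+U_i=s$ as a plain truncation of the unconditional distribution of $V_1$, and then to exploit the fact that such truncations are stochastically increasing in the shift parameter $s$. Combined with (iii), this will yield $V_2\geq_{\rm st}V_3$ via the standard characterization of the stochastic order through nondecreasing test functions.

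Concretely, let $\phi_s$ denote the common conditional distribution of $V_i$ given $V_i+U_i=s$, which by (ii) is well defined and independent of $i$, so I may compute it from the case $i=1$. Write $\{a,a+1,\ldots,b\}$ for the integer interval supporting $U_1$ and $p_v:=\prob(V_1=v)$. Since $U_1$ is independent of $V_1$ and uniform, Bayes' rule gives
\begin{equation*}
\phi_s(v)\;=\;\frac{p_v}{\sum_{w\in[s-b,\,s-a]}p_w},\qquad v\in[s-b,s-a],
\end{equation*}
so $\phi_s$ is the law of $V_1$ truncated to an integer interval of fixed width $b-a+1$ whose location shifts one step to the right as $s$ increases by one.

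Next I would verify the key monotonicity $\phi_{s+1}\geq_{\rm st}\phi_s$. This I would factor through the intermediate interval $[s-b+1,s-a]$: the first step discards the leftmost atom $s-b$ (equivalent to further conditioning on $V_1\ge s-b+1$, which is manifestly stochastically increasing), and the second step adjoins the new rightmost atom $s-a+1$, making the resulting law a convex mixture of the previous one with a point mass strictly above its support, again stochastically larger. Composing the two elementary steps yields $\phi_{s+1}\geq_{\rm st}\phi_s$, hence $s\mapsto\phi_s$ is stochastically nondecreasing.

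To finish, let $u\colon\mathbb Z\to\mathbb R$ be any bounded nondecreasing function and set $g(s):={\mathbb E}[u(V_i)\mid V_i+U_i=s]=\int u\,d\phi_s$. By the previous paragraph $g$ is nondecreasing, and by (iii) together with characterization (b) of stochastic order,
\begin{equation*}
{\mathbb E}[u(V_2)]={\mathbb E}[g(V_2+U_2)]\;\geq\;{\mathbb E}[g(V_3+U_3)]={\mathbb E}[u(V_3)],
\end{equation*}
which is exactly $V_2\geq_{\rm st}V_3$. The only substantive step is the monotonicity of $\phi_s$ in $s$, and hypothesis (i) is what makes it tractable: the uniformity of $U_1$ together with its independence from $V_1$ is precisely what reduces $\phi_s$ to a plain truncation. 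Without it, Bayes' rule would reweight the truncation by the mass function of $U_1$ and the simple sliding-window comparison would fail.
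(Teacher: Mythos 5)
Your proof is correct and takes essentially the same route as the paper's: both arguments reduce to showing that the conditional law of $V_1$ given $V_1+U_1=s$ --- a fixed-width truncation of the law of $V_1$ to a window sliding right with $s$ --- is stochastically nondecreasing in $s$, and then lift to general $V_2+U_2\geq_{\rm st}V_3+U_3$ by mixing (your nondecreasing test-function $g$ is the clean form of the paper's ``induction and taking mixtures''). Your two elementary steps (discard the leftmost atom, adjoin the rightmost one) are exactly the two inequalities in the paper's ratio computation based on the monotonicity of $(a+x)/(b+x)$, so no further comment is needed.
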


\begin{proof}
It is sufficient to consider the case with  $U_1$  uniformly distributed  on $\{1,2,\dots,k\}$, $V_2+U_2=k+1$ and $V_3+U_3=k$, where $k$ is some constant.
The general case will follow by shifting the range of the variables, induction and taking mixtures.


\noindent
Then   for $1\leq m\leq k$ we have
\begin{align}
\prob(V_2\geq m)  =
 \prob (V_1\geq m|V_1+U_1=k+1)= \nonumber \\
\frac{\sum_{j=m}^k \prob(V_1=j, U_1=k+1-j)}{\sum_{j=1}^k \prob(V_1=j, U_1=k+1-j)}  =
 \frac{k^{-1}\, \prob(m\leq V\leq k)}{k^{-1}\, \prob(1\leq V\leq k)} = \nonumber \\
\frac{\, \prob(m\leq V_1\leq k-1)+\prob(V_1=k)   }{\, \prob(1\leq V_1\leq k-1)+\prob(V_1=k)} \geq
       \frac{\, \prob(m\leq V_1\leq k-1)  }{\, \prob(1\leq V_1\leq k-1)}
\geq  \nonumber  \\
 \frac{k^{-1}\, \prob(m\leq V_1\leq k-1)  }
{k^{-1}\, \prob(0\leq V_1\leq k-1)}=
 \prob (V_1\geq m|V_1+U_1=k)=\prob(V_3\geq m), \nonumber
\end{align}
where we used that $(a+x)/(b+x)$ increases in $x\geq 0$ for $b\geq a>0$.  The   cases $m>k$ or $m<1$ are  trivial, and  the relation
follows.
\end{proof}

\begin{proof}[Proof of Lemma \ref{Lemma_Mallows_mono}]
We apply Lemma \ref{Lemma_stochastic_nonsense} repeatedly to the Lehmer code of permutations ${
\Pi}^*_n$, $\Pi$ and $\Pi'$, respectively. Here, $U_i$ is the last coordinate of the code and $V_i$
is the sum of all other coordinates.
\end{proof}

\begin{proof}[Proof of Theorem \ref{Theorem_Mallows}]
The argument goes along the line of proof of Theorem \ref{Theorem_Hamming} hence we omit some details.
By the strong law of large numbers for sums, under the Mallows distribution \eqref{Mallows} the
number of inversions    satisfies the almost sure asymptotics:
\begin{enumerate}
\item[(a)] $I(\Pi_n)\sim \frac{q}{1-q}\, n$ for $0\leq q<1$,
\item[(b)] $I(\Pi_n)\sim \frac{1}{4}n^2 $ for $q=1$ (the uniform case),
\item[(c)] ${n\choose 2}-I(\Pi_n)\sim \frac{q^{-1}}{1-q^{-1}}\,n$ for $1<q\leq \infty$.
\end{enumerate}

Let $\varkappa(\nu)$ be a sequence inducing an extreme ISS virtual permutation $\boldsymbol {\Pi}=(\Pi_n, n=1,2,\dots)$, as in
Theorem \ref{Theorem_Martin_approximation} (so $r(\nu)=\varkappa(\nu)$). Passing to a subsequence of the values of $\nu$ we can achieve that
either $\varkappa(\nu)\sim \frac{q}{1-q} \nu$ or ${\nu\choose 2}- \varkappa(\nu)\sim
\frac{q^{-1}}{1-q^{-1}}\nu$ for some $q$, or both $\varkappa(\nu)$ and  ${\nu\choose 2}- \varkappa(\nu)$ grow faster
than linearly as $\nu\to\infty$.

Consider the case $\varkappa(\nu)\sim \frac{q}{1-q} \nu$ with $0<q<1$. Fix $n$ and
$0<\eps<\min(q,1-q)$. To construct stochastic upper and lower bounds for $I(\Pi_n)$ we use  virtual permutations Mallows$(q\pm \epsilon)$ and appeal to Lemma
 \ref{Lemma_Mallows_mono}. Sending $\epsilon\to 0$ we conclude that $\Pi_n$ is Mallows$(q)$ for every $n$, hence
$\boldsymbol{\Pi}$ is Mallows$(q)$.

Two other  asymptotic regimes for $\varkappa(\nu)$ are treated similarly.
It follows that the family of Mallows$(q)$
virtual permutations contains all extreme ISS virtual permutations. Since by (a), (b) and
(c) they all have didjoint supports all these are extreme.
\end{proof}

It is of interest to recast Theorem \ref{Theorem_Mallows} in terms of ratios of combinatorial numbers.
The number of permutations $\sigma\in S_n$ with $k$ inversions is the
{\it Mahonian number} $M_{n,k}$ counting solutions to the equation
 $\eta_1+\dots+\eta_n=k$, where  $\eta_1,\eta_2,\dots$ are integer variables satisfying $0\leq \eta_j<j$.
The number of extensions of any such $\sigma$ to a permutation $\pi\in S_\nu$ with $I(\pi)=\varkappa$ is
a generalised Mahonian number $M_{n,k}^{\nu,\varkappa}$ counting solutions to $\eta_{n+1}+\dots+\eta_\nu=\varkappa-k$.
Identifying
\begin{eqnarray}\label{kernelMallows}
\frac{M_{n,k}^{\nu,\varkappa}}{M_{\nu,\varkappa}}
\end{eqnarray}
with the Martin kernel we obtain

\begin{Corollary}\label{CorMallows} If as $\nu\to\infty$ and $\varkappa=\varkappa(\nu)$ varies in some way
the ratios {\rm (\ref{kernelMallows})} converge for all $n$ and $0   \leq k\leq{n\choose 2}$
then the limit is
$q^k/[n]_q!$ for some $q\in[0,\infty]$.
The convergence holds if and only if
either $\varkappa(\nu)\sim \frac{q}{1-q}\, \nu$ for $0\leq q<1$, or ${\nu\choose 2}-\varkappa(\nu) \sim \frac{q^{-1}}{1-q^{-1}}\,\nu$ for $1<q\leq \infty$,
or both $\varkappa(\nu)$ and ${\nu\choose 2}-\varkappa(\nu)$ grow faster than linearly for $q=1$.
\end{Corollary}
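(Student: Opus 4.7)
The plan is to recognize Corollary~\ref{CorMallows} as the combinatorial reformulation of Theorem~\ref{Theorem_Mallows}. The key observation is that by direct counting the Martin kernel $M^{\nu,\varkappa}_{n,k}/M_{\nu,\varkappa}$ coincides with the conditional probability $\prob(\Pi^*_n = \pi \mid I(\Pi^*_\nu) = \varkappa)$ for any fixed $\pi\in S_n$ with $I(\pi)=k$, and under the Mallows$(q)$ distribution this same probability equals $q^k/[n]_q!$. So the corollary really asserts two things: (i) every Martin-kernel limit coincides with some Mallows$(q)$ probability function, and (ii) this limit exists precisely when $\varkappa(\nu)$ has one of the three displayed asymptotic behaviours.

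For~(i): if the kernels converge for all $n,k$ then the limits $(p_{n,k})$ satisfy the Kendall--tau analogue of (\ref{p-Ham}) with $\sum_k M_{n,k} p_{n,k}=1$, so they form the probability function of some ISS virtual permutation. Being a Martin-boundary limit along a single sequence, it is an extreme point of the Choquet simplex, and by Theorem~\ref{Theorem_Mallows} the extremes are exactly Mallows$(q)$, $q\in[0,\infty]$. Hence the limit equals $q^k/[n]_q!$.

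For~(ii) the proof of Theorem~\ref{Theorem_Mallows} already supplies both directions. On the one hand, if the limit is Mallows$(q)$ with $0<q<1$, then the strong law applied to the independent truncated-geometric Lehmer-code entries gives $I(\Pi^{\mathrm{Mallows}(q\pm\epsilon)}_\nu)/\nu \to (q\pm\epsilon)/(1-(q\pm\epsilon))$ a.s., and Lemma~\ref{Lemma_Mallows_mono} forces $\varkappa(\nu)\sim\frac{q}{1-q}\nu$; cases (b) and (c) are handled identically using ${\nu\choose 2}-I$ in place of $I$. On the other hand, if $\varkappa(\nu)$ already has the prescribed asymptotics, the same sandwich argument shows that $I(f'_{\nu\downarrow n}(\mathfrak{U}_{\nu,\varkappa(\nu)}))$ is squeezed in stochastic order between $I(\Pi^{\mathrm{Mallows}(q\pm\epsilon)}_n)$, so it converges in distribution to $I(\Pi^{\mathrm{Mallows}(q)}_n)$, and Kendall--tau spherical symmetry upgrades this to convergence of the entire kernel.

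The main bookkeeping obstacle is to rule out pathological $\varkappa(\nu)$ whose two subsequences exhibit different asymptotic behaviours. This is immediate from~(i): such subsequences would produce distinct limit functions $q^k/[n]_q!$, contradicting the assumed pointwise convergence along the full sequence $\nu\to\infty$. Apart from this, the proof is essentially a translation and calls for no new estimates beyond those already used in Theorem~\ref{Theorem_Mallows}.
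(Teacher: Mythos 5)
Your proposal is correct and follows the route the paper intends: Corollary~\ref{CorMallows} is obtained by identifying the Martin kernel $M^{\nu,\varkappa}_{n,k}/M_{\nu,\varkappa}$ with the backward transition probability $\prob(\Pi^*_n=\pi\mid I(\Pi^*_\nu)=\varkappa)$ and then rerunning the subsequence-plus-sandwich argument from the proof of Theorem~\ref{Theorem_Mallows}, with your final paragraph correctly ruling out sequences $\varkappa(\nu)$ whose subsequences fall into different regimes (distinct regimes yield distinct Mallows laws, e.g.\ distinct values of $p_{n,0}=1/[n]_q!$). One caveat: the assertion in part~(i) that a Martin-boundary limit ``along a single sequence'' is automatically an extreme point of the Choquet simplex is not a valid general principle --- the Martin boundary may strictly contain the minimal boundary --- but the claim is harmless here because your part~(ii) already identifies every subsequential limit as a Mallows law directly via Lemma~\ref{Lemma_Mallows_mono}.
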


\section{Cayley spherical symmetry}

The Cayley distance between $\pi$ and $\sigma$ in $S_n$ is  defined as  the minimal  number of
transpositions needed to transform one permutation in another.
 Right-multiplying $\pi\in S_n$ by the transposition $(i,j)$
amounts to swapping letters $i$ and $j$ in the one-row notation of $\pi$. The
multiplication  increases the number of cycles by one if $i$ and $j$ belong to the
same cycle of $\pi$, and decreases by one otherwise. Thus the distance to the
identity is $|\pi|=n-C(\pi)$, where $C(\pi)$ denotes the number of cycles in $\pi$.
We take the same projections $f_n$ as in the setting with Hamming distance of
Section \ref{Section_Hamming}. If $\sigma\in S_{n-1}$ is a permutation with $k$
cycles, then it has $n-1$ extensions with the same number of cycles and $1$
extension with $k+1$ cycles. Hence, the Cayley spherical symmetry is preserved under
these projections.

For virtual permutation $\boldsymbol{\pi}=(\pi_1,\pi_2,\dots)$, we have $C(\pi_n)=\sum_{j=1}^n \beta_j$ where
$\beta_j=1$ if $j$ is a fixed-point of $\pi_j$ and $\beta_j=0$ otherwise.
For the uniform $\boldsymbol{\Pi}^*$,
the sequence of $\beta_j$'s is independent Bernoulli with $\prob(\beta_j=1)=1/j$.
Exponential tilting with parameter $\theta\in[0,\infty]$ yields a family of ISS virtual permutations
with the {\it Ewens distribution}
\begin{equation}\label{Ewens}
\prob(\Pi_n=\pi)=\frac{\theta^{C(\pi)}}{(\theta)_n}, ~~\pi\in S_n,
\end{equation}
where $(\theta)_n=\prod_{i=0}^{n-1}(\theta+i)$. For $\theta=0$ this is a uniform
cyclic permutation, and for $\theta=\infty$ the unit mass at the identity. Under the
Ewens distribution $\boldsymbol{\Pi}$ follows the dynamics of the Chinese restaurant
process, where element $n$ is a new cycle appended to $\Pi_{n-1}$  with probability
$\theta/(\theta+n-1)$, and is inserted    next to the right in the cycle of any
$j\in[n-1]$ with probability $1/(\theta+n-1)$ \cite{CSP}.


\begin{Theorem} \label{Theorem Ewens}
The Ewens distributions {\rm (\ref{Ewens})} with $\theta\in[0,\infty]$ and only they are the
extreme  ISS virtual  permutations with respect to the Cayley distance.
\end{Theorem}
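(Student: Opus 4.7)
The plan is to imitate the monotonicity proof of Theorem~\ref{Theorem_Hamming}, replacing the fixed-point count by the cycle count. The three ingredients are (a) a stochastic monotonicity lemma for $C(f_{\nu\downarrow n}(\cdot))$ under the projections $f_n$, (b) an almost sure law of large numbers for $C(\Pi_n^\theta)$ under the Ewens distribution, and (c) a sandwich argument exploiting the continuous parameter $\theta$.

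For (a), the analogue of Lemma~\ref{Lemma_Hamming_mono} asserts that if $\Pi,\Pi'$ are Cayley-ISS in $S_\nu$ with $C(\Pi)\geq_{\rm st} C(\Pi')$, then $C(f'_{\nu\downarrow n}(\Pi))\geq_{\rm st}C(f'_{\nu\downarrow n}(\Pi'))$. As in Lemma~\ref{Lemma_Hamming_mono}, by induction and convex mixing it suffices to treat $n=\nu-1$ with deterministic values $C(\Pi')=k$ and $C(\Pi)=k+1$. Here the key observation is that $f_\nu$ changes the cycle count by exactly $-\beta$, where $\beta\in\{0,1\}$ indicates whether $\nu$ is a singleton cycle; hence $C(f_\nu(\Pi))\in\{k,k+1\}$ and $C(f_\nu(\Pi'))\in\{k-1,k\}$. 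The supports are aligned enough that stochastic dominance reduces to the trivial inequality $\mathbb P(C(f_\nu(\Pi))\geq k)=1\geq\mathbb P(C(f_\nu(\Pi'))\geq k)$, so monotonicity is essentially automatic in this case.

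For (b), decompose $C(\Pi_n^\theta)=\sum_{j=1}^n\beta_j$ with independent Bernoulli indicators $\mathbb P(\beta_j=1)=\theta/(\theta+j-1)$, as recorded just before \eqref{Ewens}. Then $\mathbb E[C(\Pi_n^\theta)]\sim\theta\log n$ and $\operatorname{Var}[C(\Pi_n^\theta)]\sim\theta\log n$, so Kolmogorov's strong law gives $C(\Pi_n^\theta)/\log n\to\theta$ almost surely for $\theta\in(0,\infty)$. The edge cases are deterministic: $C(\Pi_n^0)=1$ (uniform $n$-cycle) and $C(\Pi_n^\infty)=n$ (identity).

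For (c), let $\boldsymbol\Pi$ be an extreme Cayley-ISS virtual permutation. Theorem~\ref{Theorem_Martin_approximation} supplies a sequence $r(\nu)=\nu-c(\nu)$ along which the uniform spheres converge to $\Pi_n$; passing to a subsequence we may assume $c(\nu)/\log\nu\to\theta\in[0,\infty]$. For $\theta\in(0,\infty)$ and $0<\epsilon<\min(\theta,\infty)$, the LLN of step (b) gives $\mathbb P(C(\Pi_\nu^{\theta+\epsilon})>c(\nu))\to 1$ and $\mathbb P(C(\Pi_\nu^{\theta-\epsilon})<c(\nu))\to 1$. Step (a) then yields, after passing to the limit $\nu\to\infty$,
\[
C(\Pi_n^{\theta-\epsilon})\leq_{\rm st}C(\Pi_n)\leq_{\rm st}C(\Pi_n^{\theta+\epsilon}).
\]
Sending $\epsilon\to 0$ and using continuity of the Ewens family in $\theta$, we conclude $C(\Pi_n)\stackrel{d}{=}C(\Pi_n^\theta)$. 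Since Cayley-spherical symmetry reduces the distribution of $\Pi_n$ to that of $C(\Pi_n)$, this forces $\Pi_n\stackrel{d}{=}\Pi_n^\theta$ for every $n$, hence $\boldsymbol\Pi\stackrel{d}{=}\boldsymbol\Pi^\theta$. The boundary cases $\theta\in\{0,\infty\}$ are handled with one-sided bounds against $\boldsymbol\Pi^\epsilon$ or $\boldsymbol\Pi^{1/\epsilon}$. Finally, the a.s.\ limits from (b) show that the laws for distinct $\theta\in[0,\infty]$ have disjoint supports, so every Ewens distribution is genuinely extreme.

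The main subtlety I anticipate is bookkeeping of the different possible asymptotic regimes for $c(\nu)$: one must verify that $c(\nu)/\log\nu\to\theta$ is the right parameter (rather than e.g.\ $c(\nu)/\nu$), and that intermediate regimes like $c(\nu)\to\infty$ with $c(\nu)/\log\nu\to 0$ collapse to $\theta=0$ via the one-sided sandwich. All other steps are mild adaptations of the Hamming argument in Section~\ref{Section_singletons_mono}.
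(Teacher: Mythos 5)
Your proposal is correct and follows essentially the same route as the paper: the deterministic one-step monotonicity of the cycle count under $f_\nu$ (the paper's Lemma~\ref{Lemma_Ewens_mono}), the strong law $C(\Pi_n^\theta)\sim\theta\log n$, and the sandwich argument of Section~\ref{Section_singletons_mono}, with disjoint supports giving extremality. The paper itself only sketches this, deferring to the Hamming and Mallows proofs; your only slip is cosmetic (writing $f'_{\nu\downarrow n}$ in one place for the cycle-deletion projections $f_{\nu\downarrow n}$).
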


A proof of Theorem \ref{Theorem Ewens} can be found in \cite{Gibbs}, where it appears in a minor disguise
(see also \cite[Theorem
4.1]{Frick}). Following our unified approach, the key observation is the following lemma:

\begin{Lemma} \label{Lemma_Ewens_mono}
 Let $\Pi$ and $\Pi'$ be two random Cayley--spherically symmetric permutations in $S_\nu$, such that
$C(\Pi)\geq_{\rm st} C(\Pi')$. Then for every $n<\nu$,  also $C(f_{\nu\downarrow n}(\Pi))\geq_{\rm st} C(f_{\nu\downarrow n}(\Pi'))$.
\end{Lemma}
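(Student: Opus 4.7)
The plan is to mirror the strategy used for Lemma \ref{Lemma_Hamming_mono}. First I reduce to the single-step projection $n=\nu-1$; the general case $n<\nu$ then follows by iterating the one-step statement. Next, conditioning on the values of $C(\Pi)$ and $C(\Pi')$ and invoking the fact that stochastic order is preserved under convex mixtures, I may assume that $C(\Pi)$ and $C(\Pi')$ are nonrandom. Finally, using transitivity of $\geq_{\rm st}$ and a further inductive step on the gap $C(\Pi)-C(\Pi')$, it suffices to handle adjacent cycle counts, namely $C(\Pi')=k$ and $C(\Pi)=k+1$ for some integer $k$ (where $\Pi'$ is uniform on permutations of $[\nu]$ with $k$ cycles and $\Pi$ is uniform on those with $k+1$ cycles).

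The key observation that makes the Cayley case essentially immediate is the very simple effect of $f_\nu$ on the cycle count: if $\nu$ forms a singleton cycle in $\pi\in S_\nu$, then $C(f_\nu(\pi))=C(\pi)-1$; otherwise deletion of $\nu$ from a longer cycle leaves a cycle of length one smaller, so $C(f_\nu(\pi))=C(\pi)$. In particular, $f_\nu$ never increases and decreases the cycle count by at most one. This already pins down the support of the projected cycle counts in the reduced setting: $C(f_\nu(\Pi))\in\{k,k+1\}$ and $C(f_\nu(\Pi'))\in\{k-1,k\}$.

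With these range restrictions, verifying $\prob(C(f_\nu(\Pi))\ge m)\ge \prob(C(f_\nu(\Pi'))\ge m)$ for every integer $m$ is automatic: the left-hand side equals $1$ for $m\le k$ while the right-hand side equals $0$ for $m\ge k+1$, and the remaining thresholds are vacuous. The main obstacle in the Hamming proof was controlling the probability that $\nu$ sits in a short cycle, which required the combinatorial estimate $d_{\nu-k}<(\nu-k)!$; here no such estimate is needed, because removing $\nu$ from a non-singleton cycle cannot spawn a new cycle or otherwise disturb the cycle count. In fact, the lemma for the Cayley metric is the easiest of the three monotonicity statements precisely because the sufficient statistic $C(\pi)$ responds to the one-step projection in a uniformly bounded, one-directional way.
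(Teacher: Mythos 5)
Your proof is correct and rests on exactly the same observation as the paper's (one-line) proof: deleting $\nu$ from the cycle notation leaves the cycle count unchanged unless $\nu$ is a singleton cycle, in which case it drops by one, so after the standard reductions the two projected cycle counts have supports $\{k,k+1\}$ and $\{k-1,k\}$ and the stochastic comparison is automatic. The reductions you spell out (one step, deterministic values via mixtures, adjacent values via transitivity) are the intended routine scaffolding, so this matches the paper's argument.
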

\begin{proof}
 This immediately follows from the fact that if $\pi\in S_n$ has $k$ cycles, then $f_n(\pi)$ has either
 $k$ or $k-1$ cycles.
\end{proof}

Using Lemma \ref{Lemma_Ewens_mono} and  that under the Ewens distribution with
$\theta\in (0,\infty)$ the number of cycles grows as $C(\Pi_n)\sim \theta\log n$,
see e.g.\ \cite{ABT}, the proof of Theorem \ref{Theorem Ewens} can be produced along
the same lines as in Theorems \ref{Theorem_Hamming}   and \ref{Theorem_Mallows}.
 A counterpart of Corollary \ref{CorMallows} concerns limiting regimes for the ratios of generalised Stirling numbers of the first kind.

\end{document}